\theoremstyle{plain}
\newtheorem{theorem}{Theorem}[section]
\newtheorem{lemma}[theorem]{Lemma}
\newtheorem{proposition}[theorem]{Proposition}
\newtheorem{corollary}[theorem]{Corollary}
\theoremstyle{definition}
\newtheorem{definition}[theorem]{Definition}
\theoremstyle{remark}
\numberwithin{equation}{section}
\begin{document}

\title{finite  approximation properties of
$C^{*}$-modules II}

\author{Massoud Amini}

\address{Department of Mathematics\\ Faculty of Mathematical Sciences\\ Tarbiat Modares University\\ Tehran 14115-134, Iran}
\email{mamini@modares.ac.ir, mamini@ipm.ir}

\address{Current Address: STEM Complex, 150 Louis-Pasteur Pvt,
	Ottawa, ON, Canada K1N 6N5}


\keywords{$C^*$-module, quasidiagonal, locally reflexive, vector valued trace}

\subjclass[2010]{47A58, 46L08, 46L06}

\maketitle

\begin{abstract}
We study quasidiagonality and local reflexivity for $C^{*}$-algebras which are $C^*$-module over another $C^*$-algebra
with compatible actions. We introduce and study a notion of amenability for vector valued traces.  
\end{abstract}

\section{introduction}\label{s1}

Finite  approximation properties of $C^*$-algebras is studied in \cite{bo}. Some of these,  including important notions such as nuclearity, exactness and weak expectation property (WEP) are extended to the context of $C^*$-algebras with compatible module structure in  \cite{a2}. We continue this study here by considering other important finite  approximation properties such as quasidiagonality and local reflexivity. We also study vector valued traces and their amenability. 

A ``finite dimensional approximation'' scheme for $C^*$-morphisms is an approximately commuting diagram as follows:

\begin{center}
$\xymatrix @!0 @C=4pc @R=3pc { A \ar[rr]^{\theta} \ar[rd]^{\varphi_n} && B  \\ & \mathbb M_{k_n}(\mathbb C) \ar[ur]^{\psi_n}}$
\end{center}

\noindent where $A$ and $B$ are $C^*$-algebras and $\varphi_n$ and $\psi_n$ are contractive completely positive (c.c.p.) maps. 
The central idea of the module case, where $A$ and $B$ are also $\mathfrak A$-modules, for a $C^*$-algebra $\mathfrak A$, is to find such an approximate decomposition through the $C^*$-algebra $\mathbb M_{k_n}(\mathfrak A)$ (or through the von Neumann algebra $\mathbb M_{k_n}(\mathfrak A^{**})$). This means that  we deal with approximations through finitely generated modules (over $\mathfrak A$ or $\mathfrak A^{**}$), shortly referred to as ``finite approximation'' here.  

The paper is organized as follows: In section 2 we use the notion of retraction, already studied in the context of Hilbert $C^*$-modules \cite{lan}, to introduce the notion of vector valued amenable traces on $C^*$-modules. In section 3, we study a notion of quasidiagonality in the category of $C^*$-modules and extend Voiculescu theorem (Theorem \ref{voi}). The last section is devoted to extension of local reflexivity, Arveson's lemma (Lemma \ref{lift}) and a work of Kirchberg on $min$-continuity properties.

For the rest of this paper, we fix a $C^*$-algebra $\mathfrak A$ and let $A$ be a $C^*$-algebra and a right Banach
$\mathfrak A$-module (that is, a module with contractive right action) with compatible conditions,
\begin{equation*}
(ab)\cdot\alpha=a(b\cdot\alpha),\,\, a\cdot\alpha\beta=(a\cdot\alpha)\cdot\beta,
\end{equation*}
for each $a,b\in A $ and $\alpha, \beta\in \mathfrak A.$ In this case, we  say that $A$ is a (right) $\mathfrak A$-$C^*$-module, or simply a $C^*$-module (it is then understood that the algebra and module structures on $A$ are compatible in the above sense). A $C^*$-subalgebra which is also an $\mathfrak A$-submodule is simply called a $C^*$-submodule.

If moreover, we have the compatibility condition
$$(a^*\cdot\alpha^*)^*\cdot\beta=((a\cdot\beta)^*\cdot\alpha^*)^*,$$
for each $a\in A $ and $\alpha, \beta\in \mathfrak A,$ then if we define a left action by
$$\alpha\cdot a:=(a^*\cdot\alpha^*)^*,$$
then $A$ becomes an $\mathfrak A$-bimodule with compatibility conditions,
\begin{equation*}
\alpha\cdot (ab)=(\alpha\cdot a)b,\,\, \alpha\beta\cdot a=\alpha\cdot(\beta\cdot a),\, \, \alpha\cdot(a\cdot \beta)=(\alpha\cdot a)\cdot \beta,
\end{equation*}
for each $a,b\in A $ and $\alpha, \beta\in \mathfrak A.$ In this case, there is a canonical $*$-homomorphism from $\mathfrak A$ to the multiplier algebra $M(A)$ of $A$, sending $\alpha$ to the pair $(L_\alpha, R_\alpha)$ of left and right module multiplication map by $\alpha$. If the action is {\it non degenerate}, in the sense that, given $\alpha$, $a\cdot \alpha=0$, for each $a\in A$, implies that $\alpha=0$ (and so the same for the left action), then the above map is injective and so an isometry, and we could (and would) identify $\mathfrak A$ with a $C^*$-subalgebra of $M(A)$.

We say that a two sided action of $\mathfrak A$ on $A$ is a {\it biaction} if the right and left actions are compatible, i.e.,
$$(a\cdot\alpha)b=a(\alpha\cdot b)\ \ (\alpha\in\mathfrak A, a,b\in A).$$
When the action is non degenerate and $\mathfrak A$ acts on $A$ as a $C^*$-subalgebra of $M(A)$, we have a biaction.

In some cases we have to work with operator $\mathfrak A$-modules with no algebra structure (and in particular with certain Hilbert $\mathfrak A$-modules). If $E, F$ are operator $\mathfrak A$-modules, a module map $\phi: E\to F$ is a continuous linear map which preserves the right $\mathfrak A$-module action.

Throughout this paper, we use the notation $\mathbb B(X)$ to denote the set of bounded adjointable linear operators on an Hilbert $C^*$-module  $X$.

\section{amenable traces}

In this section, $\mathfrak A$ is a $C^*$-algebra and $A$ is a right $\mathfrak A$-$C^*$-module, with the compatibility conditions which allow one to consider $A$ as an $\mathfrak A$-bimodule. When the action is non degenerate, we identify $\mathfrak A$ with a $C^*$-subalgebra of the multiplier algebra $M(A)$ (or that of $A$, when $A$ is unital).

The representations of $\mathfrak A$-$C^*$-modules are defined on $\mathfrak A$-correspondences. An $\mathfrak A$-{\it correspondence} is a right Hilbert $\mathfrak A$-module $X$ with a left action of $\mathfrak A$ via a representation of $\mathfrak A$ into $\mathbb B(X)$. A {\it representation} of a $\mathfrak A$-bimodule $A$ in $X$ is a $*$-homomorphism from $A$ into $\mathbb B(X)$, which is also a right $\mathfrak A$-module map with respect to the canonical right $\mathfrak A$-module structure of $\mathbb B(X)$ coming from the left  $\mathfrak A$-module structure of $X$.

To define a module version of (vector valued) traces, we adapt (and slightly generalize) the notion of retraction from \cite[Chapter 5]{lan}.

\begin{definition} \label{ret}
An $\mathfrak A$-{\it retraction} is a positive right $\mathfrak A$-module map $\tau: A\to \mathfrak A$ such that

$(i)$ Im$(\tau)$ is strictly dense in $\mathfrak A$, that is, for each $\beta\in\mathfrak A$, there is a net $(a_i)\subseteq A$ such that
$$\|\tau(a_i\cdot\alpha)-\beta\alpha\|\to 0\ \ (\alpha\in\mathfrak A),$$

$(ii)$ for some bounded approximate identity $(e_i)$ of $A$, $\tau(e_i)\to p$, for some projection $p$ in $\mathfrak A$, in the strict topology.
\end{definition}

Note that since $\tau$ is positive, it is also self-adjoint (i.e., it preserves the involution). In particular, an $\mathfrak A$-retraction is automatically a bimodule map (with the left action defined as in the previous section) and the left module version of conditions above are also satisfied. This observation is  used in the proof of part $(i)$ of the next lemma.

When $A$ is unital (and $p=1$) and $\mathfrak A$ is a $C^*$-subalgebra of $A$, an $\mathfrak A$-retraction is simply a conditional expectation from $A$ onto $\mathfrak A$. Each state of the $C^*$-algebra $A$ is a $\mathbb C$-retraction. As another example,  for $\mathfrak A=pM(A)p$, where $p$ is a projection in $M(A)$,  the cut down map $a\mapsto pap$ is an $\mathfrak A$-retraction.

The first part of following result is proved by a slight modification of the argument of \cite[Lemma 5.9]{lan}. Here we sketch the proof, as it also uses the bimodule property. The second part follows from the  observation (in the introduction) about non degenerate actions and \cite[Proposition 5.10]{lan}.

\begin{lemma} \label{retr}

$(i)$ An $\mathfrak A$-retraction is a c.p. map.

$(ii)$ If the action is non degenerate, $\mathfrak A$-retractions are exactly those linear maps $\tau: A\to \mathfrak A$ which have an extension to an idempotent c.c.p. map $\tilde\tau: M(A)\to \mathfrak A$, which is strictly continuous on the unit ball of $M(A)$.
\end{lemma}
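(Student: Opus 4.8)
The plan is to prove the two parts separately, leaning on Lance's results from \cite{lan} as the author signals, and using the module-theoretic structure introduced in this section.

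For part $(i)$, the goal is to upgrade positivity of the $\mathfrak A$-retraction $\tau$ to complete positivity. The strategy is to adapt \cite[Lemma 5.9]{lan}, which establishes complete positivity for retractions in the Hilbert module setting, while exploiting the bimodule property that the preceding remark has just established (recall: since $\tau$ is positive it is self-adjoint, hence automatically a bimodule map satisfying the left-module analogues of conditions $(i)$ and $(ii)$). Concretely, I would fix $n$ and aim to show that the amplification $\tau^{(n)}: \mathbb M_n(A) \to \mathbb M_n(\mathfrak A)$ sends positive elements to positive elements. The key computation is to take a positive element of $\mathbb M_n(A)$, write it as a sum of elementary positive elements of the form $[a_i^* a_j]$, and verify that $[\tau(a_i^* a_j)]$ is positive in $\mathbb M_n(\mathfrak A)$. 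Here the bimodule identity $\alpha \cdot (a \cdot \beta) = (\alpha \cdot a) \cdot \beta$ together with self-adjointness lets one rewrite $\tau(a_i^* a_j)$ in a symmetric form from which positivity of the matrix $[\tau(a_i^* a_j)]$ can be read off by pairing against an arbitrary vector $(\beta_1, \dots, \beta_n) \in \mathfrak A^n$ and recognizing the resulting expression as $\tau$ applied to a single positive element $\left(\sum_i a_i \cdot \beta_i\right)^* \left(\sum_j a_j \cdot \beta_j\right)$.

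For part $(ii)$, the plan is to characterize $\mathfrak A$-retractions as restrictions of idempotent strictly-continuous c.c.p. maps on the multiplier algebra. In the forward direction, given an $\mathfrak A$-retraction $\tau$, I would use the non-degeneracy hypothesis and the observation from the introduction identifying $\mathfrak A$ with a $C^*$-subalgebra of $M(A)$; this lets one view condition $(ii)$ of Definition \ref{ret} (the net $\tau(e_i) \to p$ strictly) as the seed for extending $\tau$ to $M(A)$ via strict continuity on the unit ball, and the projection $p$ as the value $\tilde\tau(1)$. Idempotency $\tilde\tau^2 = \tilde\tau$ should then follow from the module-map property combined with strict density of $\mathrm{Im}(\tau)$ in condition $(i)$. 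For the converse, given such a $\tilde\tau$, its restriction to $A$ is positive and a module map, condition $(i)$ follows from strict density and the range of an idempotent, and condition $(ii)$ follows by applying $\tilde\tau$ to a bounded approximate identity and using strict continuity on the unit ball. Throughout, I would cite \cite[Proposition 5.10]{lan} to supply the extension/idempotency machinery in the pure Hilbert-module case, adapting only the bookkeeping needed to track the $C^*$-algebra structure of $A$.

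I expect the main obstacle to be in part $(ii)$: verifying that the extension $\tilde\tau$ is genuinely idempotent and c.c.p. (not merely positive and contractive) while controlling strict continuity on the unit ball of $M(A)$. The delicate point is that $\tau$ is defined only on $A$, and one must show the extension to $M(A)$ both exists and is unique as a strictly-continuous map, which requires the strict density in condition $(i)$ to pin down $\tilde\tau$ on all of $M(A)$ and the projection $p$ from condition $(ii)$ to guarantee $\tilde\tau$ is unital-onto-$p\mathfrak A p$ in the appropriate sense. The interplay between the strict topology and the idempotent relation is where the argument of \cite[Proposition 5.10]{lan} must be invoked carefully rather than quoted verbatim.
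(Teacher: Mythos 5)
Your proposal matches the paper's argument essentially verbatim: for part $(i)$ the paper likewise reduces to positivity of matrices $[\tau(a_i^*a_j)]$, checks it by pairing against vectors $(\alpha_1,\dots,\alpha_n)\in\mathfrak A^n$ (via the identification $\mathbb M_n(\mathfrak A)\cong\mathbb K(\mathfrak A^n)$ from \cite[Lemma 4.1]{lan}), and uses the bimodule property to recognize the pairing as $\tau(b^*b)$ with $b=\sum_i a_i\cdot\alpha_i$. For part $(ii)$ the paper gives no proof at all, merely citing the non-degeneracy observation and \cite[Proposition 5.10]{lan}, which is exactly the route your sketch follows.
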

\begin{proof}
We only prove part $(i)$. Identifying $\mathbb M_n(\mathfrak A)$ with $\mathbb K(\mathfrak A^n)$ \cite[Lemma 4.1]{lan}, we only need to show that
$$\sum_{i,j=1}^{n} \alpha_i^*\tau(a_i^*a_j)\alpha_j\geq 0\ \ (\alpha_1,\cdots,\alpha_n\in\mathfrak A, a_1,\cdots,a_n\in A).$$
Since $\tau$ is a bimodule map, the left hand side is the same as $\tau(b^*b)$ for $b=\sum_i a_i\cdot\alpha_i$, and we are done.\qed
\end{proof}

Next, let us assume that the action is non degenerate and regard $\mathfrak A$ as a $C^*$-subalgebra of $M(A)$. Let $X$ be a right Hilbert $A$-module. Then $X$ is also a right Hilbert $M(A)$-module: for $x\in X$, $c\in M(A)$, and bounded approximate identity $(a_i)\subseteq A$,
$$|xa_ic-xa_jc|^2=c^*(a_i-e_j)|x|^2(a_i-a_j)c\to 0,$$
as a net in $A$. Thus we may define $xc$ as the limit of the net $(xa_ic)$ in $M(A)$. When the action is non degenerate, $X$ is also a right $\mathfrak A$-module.

Assume that  the action is non degenerate. For an $\mathfrak A$-retraction, define
$$\langle x,y\rangle_\tau:=\tau(\langle x,y\rangle_A)\in\mathfrak A\ \ (x,y\in X).$$
Consider the closed submodule $N_\tau:=\{x:  \langle x,x\rangle_\tau=0\}$ of $X$. The completion of the quotient $X/N_\tau$ is a Hilbert $\mathfrak A$-module, denoted by $L^2(X, \tau)$. There is module map: $X\to  L^2(X, \tau); x\mapsto \hat x:=x+N_\tau$, satisfying $\widehat{ x\cdot\alpha}=\hat x\cdot \alpha$, for $\alpha\in\mathfrak A$.  When $X=A$, we denote this module simply by $L^2(A, \tau)$.

For $t\in\mathbb B(X)$, since $0\leq |tx|^2\leq \|t\|^2|x|^2$ \cite[Proposition 1.2]{lan}, the map
$$\pi_\tau: X/N_\tau \to X/N_\tau; \ x+N_\tau\to tx+N_\tau,$$
is well-defined and bounded, and extends to an $*$-homomorphism
$$\pi_\tau: \mathbb B(X)\to \mathbb B\big(L^2(X, \tau)\big),$$
which is injective when $\tau$ is faithful (that is, $\tau(a)=0$ implies $a=0$, for each $a\in A^+$). We say that $X$ is $\tau$-{\it self dual}, if $X$ is a self-dual Hilbert $\mathfrak A$-module under the above $\mathfrak A$-valued inner product.
In this case, $L^2(X,  \tau)$ is also a self dual Hilbert $\mathfrak A$-module (since $X/N_\tau$ is dense in $L^2(X, \tau)$ and the inner product is continuous). In particular, if $\mathfrak A$ is a von Neumann algebra and $X$ is $\tau$-self dual, then $\mathbb B\big(L^2(X, \tau)\big)$ is a von Neumann algebra, and so is the double commutant  $\pi_\tau(A)^{''}\subseteq \mathbb B\big(L^2(X, \tau)\big)$.

When $A$ is faithfully represented in $X$, say $A\subseteq \mathbb B(X)$, we may restrict this to $A$ to get an $*$-homomorphism
$$\pi_\tau: A\to \mathbb B\big(L^2(X, \tau)\big),$$
which is essentially the same as the GNS-construction of $\tau$, when $X=A$. In this case, for each $a\in A$, the operator $\pi_\tau(a)$ is defined on the dense subset $A/N_\tau\subseteq L^2(A, \tau)$ by $\pi_\tau(a)\hat b=\widehat{ab}$, and so it is justified to call $\pi_\tau$  the left regular representation associated to the $\mathfrak A$-retraction $\tau$.

If $\mathfrak A$ is unital and $(e_i)$ is the bounded approximate identity as in part $(ii)$ of the above definition, then
$$\|\hat e_i-\hat e_j\|_2=\|\tau(e_i-e_j)^2\|^{\frac{1}{2}}\leq\|\tau(e_i-e_j)\|^{\frac{1}{2}}\to 0,$$
as $i,j\to\infty$. Thus, there is $\hat 1\in L^2(A,\tau)$ with $\hat e_i\to \hat 1$ in $L^2(A,\tau)$, and $\hat 1$ is a cyclic vector for $\pi_\tau$. When $A$ is unital, $\hat 1$ is simply the canonical image of $1\in A$, but it exists, even if $A$ is not unital.

Another way to extend the GNS-construction, is adapting the so called  Kasparov-Stinespring-Gelfand-Naimark-Segal (KSGNS) construction \cite{lan} for $\mathfrak A$-retractions. Let $Y$ be a Hilbert $\mathfrak A$-module and $\rho: A\to \mathbb B(Y)$ be a c.p. map. We say that $\rho$ is {\it strict} if the net $(\rho(e_i))$ is strictly Cauchy in $\mathbb B(Y)$, for some bounded approximate identity $(e_i)\subseteq A$ (this is automatic when $A$ is unital). Since the unit ball of $\mathbb B(Y)$ is complete in strict topology \cite{lan}, the above condition implies that $\rho(e_i)\to p$, in the strict topology, for some positive element $p$ in $\mathbb B(Y)$ (the case $p=1$ is equivalent to the condition that $\rho$ is non degenerate). Now the KSGNS-construction of a strict c.p. map $\rho: A\to \mathbb B(Y)$ gives a Hilbert $\mathfrak A$-module $Y_\rho$, an adjointable operator $v\in \mathbb B(Y,Y_\rho)$ and a $*$-homomorphism $\pi_\rho: A\to \mathbb B(Y_\rho)$ with $\rho=v^*\pi_\rho(\cdot)v$, such that $\pi_\rho(A)v(Y)$ is dense in $Y_\rho$, which is universal in the sense that, for each Hilbert $\mathfrak A$-module $Z$ and $*$-homomorphism $\pi: A\to \mathbb B(Z)$ and $w\in \mathbb B(Y,Z)$ with $\rho=w^*\pi(\cdot)w$, such that $\pi(A)w(Y)$ is dense in $Z$, there is a unitary $u\in \mathbb B(Y_\rho,Z)$ with $\pi=u\pi_\rho(\cdot)u^*$ and $w=uv$ \cite[Theorem 5.6]{lan}. Indeed, $Y_\rho=A\otimes_\rho Y$ and $\pi_\rho(a)(vy)=a\dot\otimes y$, for $a\in A, y\in Y$. When $\rho$ is a non degenerate $*$-homomorphism, $Y_\rho$ is unitarily equivalent to $Y$.

Back to the $\mathfrak A$-retraction $\tau: A\to \mathfrak A$, since $\pi_\tau: A\to  \mathbb B\big(L^2(A, \tau)\big)$ is a non degenerate $*$-homomorphism, for the $\mathfrak A$-module $Y=L^2(A, \tau)$, the KSGNS-construction $Y_{\pi_\tau}$ could be identified (via a unitary equivalence) with $Y$. Under this identification, the adjointable operator $v$ above is identified with the identity and we get the above universality property for free. Another choice for $Y$ is $Y=\mathfrak A$, which gives $Y_\tau=A\otimes_\tau \mathfrak A$, with the above universal property. In this case, $v\in \mathbb B(\mathfrak A,A\otimes_\tau \mathfrak A)$ satisfies $\pi_\tau(a)(v\alpha)=a\dot\otimes \alpha$. Again, if $\mathfrak A$ is unital, the $v(1_{\mathfrak A})$ is a cyclic vector for the representation $\pi_\tau: A\to \mathbb B(A\otimes_\tau \mathfrak A)$ (indeed, $v(1_{\mathfrak A})$ is the limit of the net $(e_i\dot\otimes 1_{\mathfrak A})$ in $A\otimes_\tau \mathfrak A$).

\begin{definition}
An $\mathfrak A$-{\it trace} is an $\mathfrak A$-retraction $\tau: A\to \mathfrak A$ satisfying
$$\tau(ab)=\tau(ba)\ \ (a,b\in A).$$
\end{definition}

In this case, one could also define the right regular representation $\pi^{op}_\tau$ of $\tau$ on $A^{op}$ by $\pi^{op}_\tau(a)\hat b=\widehat{ba}$ (extended by continuity). This is well-defined, since $\tau$ is an $\mathfrak A$-trace. The proof of the next lemma goes, almost verbatim, as in the classical case \cite[6.1.2-6.1.4]{bo}.

\begin{lemma} \label{j} Let $\tau: A\to \mathfrak A$ be an $\mathfrak A$-trace.

$(i)$ $\pi_\tau(A)^{'}\supseteq \pi_\tau^{op}(A^{op})$ in $\mathbb B\big(L^2(A, \tau)\big),$

$(ii)$ there is a conjugate $\mathfrak A$-morphism $J: L^2(A, \tau)\to L^2(A, \tau)$, with $J^2=id$, satisfying $J\hat a=\widehat{a^*}$ and $J\pi_\tau(a)J=\pi_\tau^{op}(a^*)$, for each $a\in A$, and $\langle Jz, z^{'}\rangle=\langle Jz^{'}, z\rangle$ for each $z, z^{'}\in L^2(A, \tau)$.

When $\mathfrak A$ is unital, we also have,

$(iii)$  $Jt\hat 1=t^*\hat 1$, for each $t\in \pi_\tau(A)^{'}$,

$(iv)$  $\pi_\tau(A)^{''}= \pi_\tau^{op}(A^{op})^{'}$ and $\pi_\tau(A)^{'}= \pi_\tau^{op}(A^{op})^{''}$ in $\mathbb B\big(L^2(A, \tau)\big).$
\end{lemma}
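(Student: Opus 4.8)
The plan is to carry the classical GNS-for-traces computations of \cite[6.1.2--6.1.4]{bo} into the Hilbert $\mathfrak A$-module $L^2(A,\tau)$, the two new features being that $\tau$ is an $\mathfrak A$-bimodule map (by the observation following Definition \ref{ret}) and that the commutant statements in $(iv)$ require $\mathbb B\big(L^2(A,\tau)\big)$ to be a von Neumann algebra, which by the discussion of $\tau$-self duality earlier in this section holds when $\mathfrak A$ is a von Neumann algebra and $L^2(A,\tau)$ is $\tau$-self dual. Throughout I would work on the dense submodule $A/N_\tau$ and extend by continuity.

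For $(i)$ I would compute directly, using associativity in $A$: for $a,b,c\in A$,
$$\pi_\tau(a)\pi_\tau^{op}(b)\hat c=\widehat{acb}=\pi_\tau^{op}(b)\pi_\tau(a)\hat c,$$
so $\pi_\tau^{op}(A^{op})\subseteq\pi_\tau(A)'$. For $(ii)$, set $J\hat a:=\widehat{a^*}$. Because $\tau$ is a trace,
$$\langle J\hat a,J\hat b\rangle_\tau=\tau(ab^*)=\tau(b^*a)=\langle\hat b,\hat a\rangle_\tau,$$
so $J$ is a conjugate-linear $\mathfrak A$-isometry; the case $b=a$ gives $\langle J\hat a,J\hat a\rangle_\tau=\tau(aa^*)=\tau(a^*a)=\langle\hat a,\hat a\rangle_\tau$, so $J$ kills $N_\tau$ and extends to $L^2(A,\tau)$. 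Then $J^2=\mathrm{id}$ is clear, $J\pi_\tau(a)J\hat b=\widehat{ba^*}=\pi_\tau^{op}(a^*)\hat b$ is a one-line check, and the symmetry is $\langle J\hat a,\hat b\rangle_\tau=\tau(ab)=\tau(ba)=\langle J\hat b,\hat a\rangle_\tau$. The only genuinely module-theoretic point is that $J$ is a \emph{conjugate} $\mathfrak A$-morphism: from $(a\cdot\alpha)^*=\alpha^*\cdot a^*$ one gets $J(\hat a\cdot\alpha)=\alpha^*\cdot J\hat a$, so $J$ intertwines the right action by $\alpha$ with the left action by $\alpha^*$; this is exactly where the bimodule structure provided by the self-adjointness of $\tau$ enters.

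Part $(iii)$ is the computation, for $t\in\pi_\tau(A)'$ and $a\in A$,
$$\langle Jt\hat 1,\hat a\rangle_\tau=\langle J\hat a,t\hat 1\rangle_\tau=\langle\pi_\tau(a^*)\hat 1,t\hat 1\rangle_\tau=\langle\hat 1,\pi_\tau(a)t\hat 1\rangle_\tau=\langle\hat 1,t\hat a\rangle_\tau=\langle t^*\hat 1,\hat a\rangle_\tau,$$
using the symmetry of $(ii)$, then $\pi_\tau(a^*)^*=\pi_\tau(a)$, and finally $t\in\pi_\tau(A)'$; density of $\{\hat a:a\in A\}$ then gives $Jt\hat 1=t^*\hat 1$.

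Finally, for $(iv)$ one inclusion is formal: from $(i)$ and commutant reversal, $\pi_\tau(A)''\subseteq\pi_\tau^{op}(A^{op})'$ and $\pi_\tau^{op}(A^{op})''\subseteq\pi_\tau(A)'$, and the two asserted equalities are equivalent under taking commutants, so it suffices to prove $\pi_\tau(A)'\subseteq\pi_\tau^{op}(A^{op})''$. By $(ii)$, $J\pi_\tau(A)J=\pi_\tau^{op}(A^{op})$, and since conjugation by the involution $J$ satisfies $(JSJ)'=JS'J$ it preserves weak closures, whence $J\pi_\tau(A)''J=\pi_\tau^{op}(A^{op})''$. Thus all of $(iv)$ reduces to the commutation theorem $JMJ=M'$ for $M=\pi_\tau(A)''$. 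The hard part will be the inclusion $M'\subseteq JMJ$: this is the finite (tracial) case of the commutation theorem, for which $\hat 1$ is cyclic for $M$ by construction and cyclic for $M'$ because $\pi_\tau^{op}(A^{op})\hat 1=\{\hat b:b\in A\}$ is dense, hence cyclic and separating for both; since $\tau$ is tracial the closure of $x\hat 1\mapsto x^*\hat 1$ is already isometric (the modular operator is trivial) and coincides with $J$, so the elementary argument of \cite[6.1.4]{bo} applies, using $(iii)$ to compute the action of $J$ on $M'\hat 1$ together with weak-operator approximation. I expect the only real work beyond the scalar case to be bookkeeping: checking that the commutant calculus, Kaplansky-type density, and the cyclic--separating vector arguments all remain valid inside $\mathbb B\big(L^2(A,\tau)\big)$ for a $\tau$-self dual module over the von Neumann algebra $\mathfrak A$, which is precisely the content carried by the phrase ``almost verbatim''.
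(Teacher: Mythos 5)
Your treatments of $(i)$, $(ii)$ and $(iii)$ are correct and are exactly the ``almost verbatim'' adaptation the paper intends; in particular, the observation that $(a\cdot\alpha)^*=\alpha^*\cdot a^*$ is what makes $J$ a conjugate $\mathfrak A$-morphism is the right module-theoretic point. The gap is in $(iv)$. Your reduction of $(iv)$ to the commutation theorem $JMJ=M'$ for $M=\pi_\tau(A)''$ is fine, but the proof you then sketch for the hard inclusion --- cyclic and separating vectors, triviality of the modular operator on $M$, and ``weak-operator approximation'' --- is the wrong classical proof to port. In $\mathbb B\big(L^2(A,\tau)\big)$ over a general unital $\mathfrak A$ there is no bicommutant theorem: $\pi_\tau(A)''$ is \emph{not} a weak closure of $\pi_\tau(A)$, Kaplansky-type density is unavailable, and no modular theory exists for Hilbert $C^*$-modules. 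You sense this yourself, which is why you add the hypotheses ``$\mathfrak A$ a von Neumann algebra and $L^2(A,\tau)$ $\tau$-self dual'' --- but those hypotheses are not in the statement: the lemma asserts $(iv)$ for \emph{every} unital $\mathfrak A$. As written, your argument either proves a weaker statement than claimed or stalls exactly at the approximation step.

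The statement as given is nevertheless correct, because the elementary tracial argument behind \cite[6.1.4]{bo} is purely algebraic and needs none of that machinery. Concretely: for $x\in\pi_\tau(A)'$ and $y\in\pi_\tau^{op}(A^{op})'$, part $(iii)$, its op-version ($Jy\hat 1=y^*\hat 1$ for $y\in\pi_\tau^{op}(A^{op})'$, proved by the symmetric computation), and the symmetry $\langle Jz,z'\rangle=\langle Jz',z\rangle$ give
$$\langle \hat 1, xy\hat 1\rangle=\langle x^*\hat 1,y\hat 1\rangle=\langle Jx\hat 1,y\hat 1\rangle=\langle Jy\hat 1,x\hat 1\rangle=\langle y^*\hat 1,x\hat 1\rangle=\langle\hat 1,yx\hat 1\rangle.$$
Since $\pi_\tau(A)\subseteq\pi_\tau^{op}(A^{op})'$ by $(i)$ and commutants are algebras, you may replace $y$ by $\pi_\tau(a)\,y\,\pi_\tau(b)$ in this identity; using that $x$ commutes with $\pi_\tau(a)$ and $\pi_\tau(b)$, this yields $\langle\widehat{a^*},(xy-yx)\hat b\rangle=0$ for all $a,b\in A$, whence $xy=yx$ by density of $A/N_\tau$ in $L^2(A,\tau)$. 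Thus $\pi_\tau(A)'\subseteq\pi_\tau^{op}(A^{op})''$ and $\pi_\tau^{op}(A^{op})'\subseteq\pi_\tau(A)''$, which combined with the reverse inclusions coming from $(i)$ give both equalities in $(iv)$ --- with no assumption beyond $\mathfrak A$ unital, no self-duality, and no weak-operator limits. Replacing your final paragraph by this computation closes the gap.
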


In part $(ii)$, the fact that $J$ is a conjugate $\mathfrak A$-morphism simply means that $J(\alpha\cdot \hat a)=J(\hat a)\cdot\alpha^*$, and the same for the right action.

In the next definition, we use the left module structure of $Y$ to define the canonical right module action of $\mathfrak A$ on $\mathbb B(Y)$ by $$(t\cdot\alpha)(y)=t(\alpha\cdot y)\ \ (t\in\mathbb B(Y), \alpha\in \mathfrak A, y\in Y).$$

\begin{definition} \label{am}
An $\mathfrak A$-trace $\tau: A\to \mathfrak A$ is called {\it amenable} if for every faithful representation $A\subseteq \mathbb B(Y)$ of $A$ in an $\mathfrak A$-correspondence $Y$, such that $\tau$ has an extension to a  c.c.p. right module map $\phi: \mathbb B(Y)\to \mathfrak A$ satisfying $\phi(uxu^*)=\phi(x)$, for each $x\in B(Y)$ and each unitary $u$ in $A+\mathbb C I$, where $I$ is the identity operator on $Y$.
\end{definition}

Note that if $\tau$ is only an $\mathfrak A$-retraction satisfying the above extension property, then it is automatically an $\mathfrak A$-trace. When $A$ is a unital $C^*$-algebra, faithfully and non degenerately represented in $Y$, then we only need to check that $\phi$ is stable under the unitary conjugation by unitaries in $A$. Finally, and the most important of all, note that, unlike the classical case \cite[Proposition 3.1.2]{bo}, here the existence of invariant extension is not independent of the choice of the representation, unless $A$ is represented in an  $\mathfrak A$-module with injective algebra of adjointable operators.

\begin{lemma} \label{rep} Let $A\subseteq \mathbb B(Y)$ be a faithful representation in a $\mathfrak A$-correspondence $Y$ such that $\mathbb B(Y)$ is an injective $\mathfrak A$-module. Then if $\tau: A\to \mathfrak A$  is an $\mathfrak A$-trace which enjoys the above invariant extension property  for this representation, then $\tau\circ\pi^{-1}$ has the invariant extension property for any other faithful representation  $\pi: A \to\mathbb B(X)$ of $A$ in a $\mathfrak A$-correspondence $X$, that is,  $\tau$ is an amenable $\mathfrak A$-trace.
\end{lemma}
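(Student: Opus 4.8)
The plan is to transport the given invariant extension on $\mathbb{B}(Y)$ to one on $\mathbb{B}(X)$, using the injectivity of $\mathbb{B}(Y)$ to manufacture a connecting module map and then a multiplicative-domain argument to carry the conjugation invariance across. Write $\phi: \mathbb{B}(Y)\to\mathfrak{A}$ for the c.c.p. right module extension of $\tau$ with $\phi(vyv^*)=\phi(y)$ for every unitary $v\in A+\mathbb{C}I_Y$, where $A$ is identified with its faithful image in $\mathbb{B}(Y)$, and let $\pi:A\to\mathbb{B}(X)$ be an arbitrary faithful representation in an $\mathfrak{A}$-correspondence $X$. First I would build the connecting map: the assignment $\iota_0:\pi(A)\to\mathbb{B}(Y)$, $\pi(a)\mapsto a$, is a $*$-isomorphism onto $A\subseteq\mathbb{B}(Y)$ and a right $\mathfrak{A}$-module map, since both $\pi$ and the inclusion $A\subseteq\mathbb{B}(Y)$ are module representations. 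Regarding $\pi(A)\subseteq\mathbb{B}(X)$ as an inclusion of operator $\mathfrak{A}$-modules and using that $\mathbb{B}(Y)$ is an injective $\mathfrak{A}$-module, I extend $\iota_0$ to a c.c.p. right $\mathfrak{A}$-module map $\Phi:\mathbb{B}(X)\to\mathbb{B}(Y)$ with $\Phi|_{\pi(A)}=\iota_0$.

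Next I would invoke the multiplicative domain. Since $\Phi$ restricts to the $*$-homomorphism $\iota_0$ on the $C^*$-subalgebra $\pi(A)$, the multiplicative-domain theorem \cite{bo} places $\pi(A)$ inside the multiplicative domain of $\Phi$; hence $\Phi(\pi(a)x)=a\,\Phi(x)$ and $\Phi(x\pi(a))=\Phi(x)\,a$ for all $a\in A$ and $x\in\mathbb{B}(X)$, recalling $\Phi(\pi(a))=a$. I then set $\psi:=\phi\circ\Phi:\mathbb{B}(X)\to\mathfrak{A}$. As a composition of c.c.p. right module maps, $\psi$ is again a c.c.p. right module map, and on $\pi(A)$ it satisfies $\psi(\pi(a))=\phi(a)=\tau(a)$, so $\psi$ extends $\tau\circ\pi^{-1}$.

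It remains to verify conjugation invariance, which is exactly where the multiplicative domain pays off. For a unitary $u=\pi(a)+\lambda I_X\in\pi(A)+\mathbb{C}I_X$ and any $x\in\mathbb{B}(X)$, expanding $uxu^*$ and applying the multiplicative-domain identities termwise gives $\Phi(uxu^*)=(a+\lambda I_Y)\,\Phi(x)\,(a+\lambda I_Y)^*=v\,\Phi(x)\,v^*$, where $v=a+\lambda I_Y$; the relations making $u$ unitary make $v$ a unitary in $A+\mathbb{C}I_Y$. Therefore $\psi(uxu^*)=\phi(v\,\Phi(x)\,v^*)=\phi(\Phi(x))=\psi(x)$ by the invariance of $\phi$. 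Since $\pi$ was an arbitrary faithful $\mathfrak{A}$-correspondence representation, $\tau\circ\pi^{-1}$ always admits an invariant c.c.p. module extension, which is precisely the assertion that $\tau$ is amenable.

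I expect the main obstacle to be the first step: one must know that injectivity of $\mathbb{B}(Y)$ as an operator $\mathfrak{A}$-module yields an extension $\Phi$ that is genuinely \emph{completely positive} (and contractive), not merely completely bounded, and that it respects the right $\mathfrak{A}$-action; granting this, the multiplicative-domain step and the invariance computation are formal. A secondary point worth emphasizing is that the non-unital case needs no extra care: the $\mathbb{C}I_X$ summand is handled without assuming $\Phi$ unital, precisely because the only terms involved are $\Phi(\pi(a)x)$, $\Phi(x\pi(a))$ and $\Phi(x)$, all controlled by the multiplicative domain, so the value of $\Phi$ on $I_X$ never enters.
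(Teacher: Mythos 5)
Your proposal is correct and follows essentially the same route as the paper's own proof: use injectivity of $\mathbb B(Y)$ as an $\mathfrak A$-module to extend $\pi^{-1}\colon \pi(A)\to \mathbb B(Y)$ to a c.c.p.\ module map $\Phi\colon \mathbb B(X)\to\mathbb B(Y)$, note that $\pi(A)$ lies in the multiplicative domain of $\Phi$, and take $\psi=\phi\circ\Phi$. Your explicit expansion of $\Phi(uxu^*)$ via the multiplicative-domain identities simply spells out the invariance verification that the paper states without detail.
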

\begin{proof}
First note that $\tau\circ\pi^{-1}$ is clearly an $\mathfrak A$-retraction. By injectivity, there is a  c.c.p. module map $\Phi: \mathbb B(X)\to\mathbb B(Y)$ which extends $\pi^{-1}$ on $\pi(A)$, and  $\pi(A)$ is in the multiplicative
domain of $\Phi$. Consider the c.c.p. map $\phi: \mathbb B(Y)\to \mathfrak A$ satisfying the conditions of Definition \ref{am}, and put $\psi=\phi\circ\Phi$. This is a c.c.p. map on $\mathbb B(X)$ which extends $\tau\circ\pi^{-1}$ on $\pi(A)$ and enjoys the invariance property on $\mathbb B(X)$.
\end{proof}

The above lemma suggests that we look for appropriate  $\mathfrak A$-correspondence $Y$ such that there is a  faithful representation $A\subseteq \mathbb B(Y)$ and $\mathbb B(Y)$ is an injective $\mathfrak A$-module. One case of special interest is $Y=K\otimes \mathfrak A$, where $K$ is an appropriate Hilbert space. We use a minimal Stinespring dilation to show that $A$ could always be faithfully represented in such an space by a module map. In the following lemma, we assume that we have a biaction (this is satisfied if the action is non degenerate and $\mathfrak A$ acts on $A$ as a $C^*$-subalgebra of $M(A)$ by multiplication). This lemma relaxes the separability condition used in \cite[section 3]{a2}, showing that separability is not needed to define the {\it min} module tensor product.

\begin{lemma} \label{rep2} Let $\mathfrak A$ be unital. There is a Hilbert space $K$ and a  $\mathfrak A$-correspondence structure on $K\otimes \mathfrak A$ such that $A$ could  be faithfully represented in $K\otimes \mathfrak A$.
\end{lemma}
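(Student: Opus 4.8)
The plan is to realize the desired faithful module representation as a direct sum of minimal Kasparov--Stinespring (KSGNS) dilations indexed by a separating family of states of the $C^*$-algebra $A$; the point is that in each such dilation the coefficient algebra is exactly $\mathfrak A$, so each summand turns out to be a \emph{free} Hilbert $\mathfrak A$-module of the form $H\otimes\mathfrak A$. First I would fix, for each state $\omega$ of $A$, the map $\Phi_\omega:A\to\mathfrak A$ given by $\Phi_\omega(a)=\omega(a)1_{\mathfrak A}$. This is contractive and strict (since $\mathfrak A$ is unital, $\Phi_\omega(e_i)=\omega(e_i)1_{\mathfrak A}\to 1_{\mathfrak A}$), and it is completely positive because $[\omega(a_{ij})]\geq0$ forces $[\Phi_\omega(a_{ij})]=[\omega(a_{ij})]\otimes 1_{\mathfrak A}\geq0$ in $\mathbb M_n(\mathfrak A)$. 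Viewing $\mathfrak A=\mathbb B(\mathfrak A)$, its minimal Stinespring (KSGNS) dilation is $E_\omega:=A\otimes_{\Phi_\omega}\mathfrak A$, the completion of the algebraic tensor product $A\odot\mathfrak A$ for the $\mathfrak A$-valued form $\langle a\otimes\gamma,b\otimes\delta\rangle=\gamma^*\Phi_\omega(a^*b)\delta=\omega(a^*b)\,\gamma^*\delta$, carrying the $*$-homomorphism $\pi_\omega:A\to\mathbb B(E_\omega)$, $\pi_\omega(c)(a\otimes\gamma)=ca\otimes\gamma$.

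The key structural step is to equip $E_\omega$ with a left $\mathfrak A$-action turning it into an $\mathfrak A$-correspondence and $\pi_\omega$ into a module map, and here the biaction hypothesis is exactly what is needed. Setting $\lambda_\omega(\eta)(a\otimes\gamma)=(\eta\cdot a)\otimes\gamma$, the identities $(\eta\cdot a)^*=a^*\cdot\eta^*$ and $(x\cdot\alpha)y=x(\alpha\cdot y)$ give $\omega\big((\eta\cdot a)^*b\big)=\omega\big(a^*(\eta^*\cdot b)\big)$, whence $\lambda_\omega(\eta)^*=\lambda_\omega(\eta^*)$, and a similar computation yields the bound $\langle\lambda_\omega(\eta)x,\lambda_\omega(\eta)x\rangle\leq\|\eta\|^2\langle x,x\rangle$, so $\lambda_\omega$ descends to a well-defined $*$-representation $\mathfrak A\to\mathbb B(E_\omega)$. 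Finally the biaction identity $(c\cdot\eta)a=c(\eta\cdot a)$ gives $\pi_\omega(c\cdot\eta)=\pi_\omega(c)\lambda_\omega(\eta)$, i.e. $\pi_\omega$ is a representation of the bimodule $A$ in the sense of Section 2.

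To recognize $E_\omega$ as a free module, I would observe that $\langle a\otimes\gamma,b\otimes\delta\rangle=\omega(a^*b)\,\gamma^*\delta$ is exactly the inner product of $H_\omega\otimes\mathfrak A$, where $H_\omega$ is the GNS space of $\omega$; the assignment $a\otimes\gamma\mapsto\hat a_\omega\otimes\gamma$ then extends to a unitary $E_\omega\cong H_\omega\otimes\mathfrak A$ of Hilbert $\mathfrak A$-modules, under which $\pi_\omega(c)$ corresponds to $\pi^\omega(c)\otimes 1_{\mathfrak A}$ with $\pi^\omega$ the GNS representation of $\omega$. Letting $\omega$ range over all states of $A$ and putting $K=\bigoplus_\omega H_\omega$, the module $\bigoplus_\omega E_\omega\cong K\otimes\mathfrak A$ inherits the correspondence structure $\bigoplus_\omega\lambda_\omega$, and $\pi:=\bigoplus_\omega\pi_\omega$ is an amplification of the universal representation of $A$, hence a faithful module representation $A\to\mathbb B(K\otimes\mathfrak A)$.

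The main obstacle I anticipate is the verification that $\lambda_\omega$ is genuinely well defined on the quotient and completion and intertwines correctly with $\pi_\omega$: this is the only place where the full force of the biaction (rather than merely the bimodule relations) enters, through the repeated use of $(x\cdot\alpha)y=x(\alpha\cdot y)$, and it must also be checked that the uniformly contractive summands $\pi_\omega(c)$ assemble into an adjointable operator on $K\otimes\mathfrak A$. The identification $E_\omega\cong H_\omega\otimes\mathfrak A$, by contrast, is immediate once the inner products are matched, and it is precisely this freeness that dispenses with any separability assumption, since one is free to enlarge $K$ to absorb an entire separating family of states.
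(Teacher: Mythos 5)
Your proof is correct, but it takes a genuinely different route from the paper's. The paper fixes a \emph{single} faithful c.c.p.\ map $\varphi\colon A\to\mathfrak A'\subseteq\mathbb B(H)$ (suggested: $\varphi=\phi(\cdot)1$ for a faithful state $\phi$), forms its minimal Stinespring dilation $(\pi,K,V)$ with $K=A\otimes_\varphi H$, and obtains the left $\mathfrak A$-action on $K$ from the commutant lifting theorem, $\sigma=\rho|_{\mathfrak A}\colon\mathfrak A\to\pi(A)'$, which acts on the Hilbert-space leg, $\sigma(\beta)(b\otimes h)=b\otimes\beta h$; the correspondence $K\otimes\mathfrak A$ then carries the amplification of $\sigma$. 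You instead run over the whole state space, identify each KSGNS module $A\otimes_{\Phi_\omega}\mathfrak A$ with the free module $H_\omega\otimes\mathfrak A$, define the left action on the \emph{$A$-leg} by $\lambda_\omega(\eta)(a\otimes\gamma)=(\eta\cdot a)\otimes\gamma$, and take a direct sum. The difference is substantive: in your construction the module-map identity $\pi_\omega(c\cdot\eta)=\pi_\omega(c)\lambda_\omega(\eta)$ \emph{is} the biaction identity pushed through the tensor product, so nothing further needs reconciling; in the paper's construction the analogous step $\pi(a\cdot\beta)=\pi(a)\sigma(\beta)$ amounts to the identity $(\beta\cdot b)\otimes h=b\otimes\beta h$ in $A\otimes_\varphi H$, i.e.\ to $\varphi$ intertwining the $\mathfrak A$-actions in the sense $\varphi((c\cdot\beta)b)=\varphi(cb)\beta$, an extra constraint on $\varphi$ that the scalar choice $\phi(\cdot)1$ does not satisfy --- your route never meets this delicate point, because the left action is manufactured from the bimodule structure of $A$ itself rather than from the commutant. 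What the paper's route buys is economy (a single dilation space, and a left action landing in $\pi(A)'$, so the correspondence structure commutes with the represented copy of $A$); what yours buys is that no faithful state, nor any faithful c.c.p.\ map into $\mathfrak A'$, is required --- a separating family of states always exists --- so non-separable $A$ is covered with no extra argument, which is precisely the point the lemma advertises, at the cost of a large $K$ and a left action that need not commute with $\pi(A)$ (the statement does not ask for that). Your two flagged verifications do go through: with $\beta=(\|\eta\|^2 1_{\mathfrak A}-\eta^*\eta)^{1/2}$ one gets $a_i^*\bigl((\|\eta\|^2 1_{\mathfrak A}-\eta^*\eta)\cdot a_j\bigr)=(\beta\cdot a_i)^*(\beta\cdot a_j)$, hence $\bigl[\omega((\eta\cdot a_i)^*(\eta\cdot a_j))\bigr]\le\|\eta\|^2\bigl[\omega(a_i^*a_j)\bigr]$ and the contraction bound (add one line comparing the action of $1_{\mathfrak A}$ with the identity if that action is not assumed unital); and the uniformly bounded adjointable summands $\pi_\omega(c)$, $\lambda_\omega(\eta)$ assemble into adjointable operators on the direct sum, with faithfulness of $\pi_u\otimes 1_{\mathfrak A}$ using unitality of $\mathfrak A$ exactly as in the paper.
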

\begin{proof}
Let $\mathfrak A\subseteq \mathbb B(H)$ be a faithful representation of the $C^*$-algebra $\mathfrak A$ in a Hilbert space $H$. Take a faithful c.c.p. map $\varphi: A\to \mathfrak A^{'}\subseteq \mathbb B(H)$ (such a c.c.p. map always exists, for instance take any faithful state $\phi$ of $\mathfrak A$ and put $\varphi(a)=\phi(a)1_{\mathfrak A}$). Let $(\pi,K,V)$ be the minimal Stinespring dilation of $\varphi$. Then there is a $*$-homomorphism
$$\rho:\varphi(A)^{'}\to \pi(A)^{'}\subseteq \mathbb B(K)$$ such that $\varphi(a)x=V^*\pi(a)\rho(x)V$, for $a\in A, x\in \varphi(A)^{'}$ \cite[1.5.6]{bo}. Since $\varphi=V^*\pi(\cdot)V$ is faithful, so is $\pi$ (just calculate both sides of the last equality at $aa^*$). On the other hand, $K=A\otimes_\varphi H$ and $\pi(a)(b\otimes h)=ab\otimes h$, thus
\begin{align*}
\pi(a\cdot\alpha)(b\otimes h)&=(a\cdot\alpha)b\otimes h
=a(\alpha\cdot b)\otimes h\\
&=\pi(a)\big((\alpha\cdot b)\otimes h\big)
=\big(\pi(a)\cdot\alpha\big)(b\otimes h),
\end{align*}
for $\alpha\in \mathfrak A, a,b\in A, h\in H$.

Note that since $\varphi(A)\subseteq \mathfrak A^{'}$, $\varphi(A)^{'}\supseteq \mathfrak A^{''}\supseteq \mathfrak A$, so put $$\sigma=\rho|_{\mathfrak A}: \mathfrak A\to \pi(A)^{'}\subseteq \mathbb B(K)$$
and define the left action of $\mathfrak A$ on $K$ by
$$\alpha\cdot\xi:=\sigma(\alpha)\xi\ \ (\alpha\in \mathfrak A, \xi\in K),$$
and let $A$ act on the right Hilbert $\mathfrak A$-module $K\otimes \mathfrak A$ with inner product $$\langle \xi\otimes\alpha,\eta\otimes\beta\rangle:=\langle \xi,\eta\rangle\alpha^*\beta\ \ (\alpha,\beta\in \mathfrak A, \xi,\eta\in K),$$
via $\tilde\pi: A\to \mathbb B(K\otimes\mathfrak A)$, defined by
$$\tilde\pi(a)(\xi\otimes\alpha):=\pi(a)\xi\otimes\alpha\ \ (\alpha\in \mathfrak A, a\in A, \xi\in K),$$
then $\tilde\pi$ is faithful, since $\pi$ is faithful and $\mathfrak A$ is unital. Moreover, since $\pi$ and $\sigma$ have commuting ranges,
\begin{align*}
\tilde\pi(a\cdot\beta)(\xi\otimes \alpha)&=\pi(a\cdot\beta)\xi\otimes \alpha
=(\pi(a)\cdot\beta)(\xi)\otimes \alpha\\
&=\pi(a)(\beta\cdot\xi)\otimes \alpha
=\pi(a)\sigma(\beta)\xi\otimes \alpha\\
&=\sigma(\beta)\pi(a)\xi\otimes \alpha
=\beta\cdot\big(\pi(a)\xi\otimes \alpha\big)\\
&=\beta\cdot\big(\tilde\pi(a)(\xi\otimes \alpha)\big)
=\big(\tilde\pi(a)\cdot\beta\big)(\xi\otimes \alpha),
\end{align*}
for $\alpha,\beta\in \mathfrak A, a\in A, \xi\in K$.
Therefore $\tilde\pi$ is the required faithful representation.
\end{proof}

In the category of Hilbert $\mathfrak A$-modules with bounded adjointable maps, it is known that $\mathfrak A$ is an injective object iff the multiplier algebra $M(\mathfrak A)$ is monotone complete \cite[Theorem 1.1]{f}. I am not aware of conditions for injectivity of $\mathfrak A$ in our category of $C^*$-modules with c.c.p. module maps.

\begin{lemma} \label{inj} Let $\mathfrak A$ be unital. For the $\mathfrak A$-correspondence $K\otimes \mathfrak A$ of the above lemma, if $\mathbb B(K\otimes \mathfrak A)$ is a von Neumann algebra, then if $\mathfrak A$ is an injective object in the category of $\mathfrak A$-modules with c.c.p. module maps as morphisms, then so is $\mathbb B(K\otimes \mathfrak A)$.
\end{lemma}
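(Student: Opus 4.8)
The plan is to reduce the injectivity of $\mathbb B(K\otimes\mathfrak A)$ to that of $\mathfrak A$ through the matricial structure of the algebra, first on finite rank corners and then by a weak-$*$ limit that exploits the von Neumann algebra hypothesis. Fix an orthonormal basis $(e_i)_{i\in I}$ of $K$ and, for a finite subset $\lambda\subseteq I$, let $p_\lambda\in\mathbb B(K\otimes\mathfrak A)$ be the projection onto $(\mathrm{span}\{e_i:i\in\lambda\})\otimes\mathfrak A$. Each corner $M_\lambda:=p_\lambda\mathbb B(K\otimes\mathfrak A)p_\lambda=\mathbb B(\mathbb C^{|\lambda|}\otimes\mathfrak A)$ is isomorphic, as a $C^*$-algebra and as a right $\mathfrak A$-module, to $\mathbb M_{|\lambda|}(\mathfrak A)$ (here one uses that $\mathfrak A$ is unital, so $\mathbb B(\mathfrak A)=\mathfrak A$), and the net $(p_\lambda)$ increases strictly to the identity.

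First I would establish injectivity of a single finite corner $\mathbb M_n(\mathfrak A)$. Given an inclusion $F\subseteq G$ of operator $\mathfrak A$-modules and a c.c.p. module map $\phi:F\to\mathbb M_n(\mathfrak A)$, I pass to the associated map $s_\phi:\mathbb M_n(F)\to\mathfrak A$ given on matrices by $s_\phi([x_{ij}])=\tfrac1n\sum_{i,j}\phi(x_{ij})_{ij}$, the module-valued analogue of the Paulsen--Choi correspondence. A routine verification shows that $\phi\mapsto s_\phi$ is a bijection between c.c.p. module maps $F\to\mathbb M_n(\mathfrak A)$ and c.c.p. module maps $\mathbb M_n(F)\to\mathfrak A$, where $\mathbb M_n(F)$ carries its entrywise right $\mathfrak A$-action. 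Since $\mathbb M_n(F)\subseteq\mathbb M_n(G)$ is an inclusion in our category and $\mathfrak A$ is injective, $s_\phi$ extends to a c.c.p. module map $s:\mathbb M_n(G)\to\mathfrak A$; transporting $s$ back through the correspondence yields a c.c.p. module extension $\tilde\phi:G\to\mathbb M_n(\mathfrak A)$ of $\phi$. Hence every finite corner $M_\lambda$ is an injective object, and this step needs only injectivity of $\mathfrak A$, not the von Neumann hypothesis.

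For general $K$, let $\phi:F\to\mathbb B(K\otimes\mathfrak A)$ be a c.c.p. module map with $F\subseteq G$. For each $\lambda$ the compression $x\mapsto p_\lambda\phi(x)p_\lambda$ is a c.c.p. module map into the injective corner $M_\lambda$, so by the previous step it extends to a c.c.p. module map $\tilde\phi_\lambda:G\to M_\lambda\subseteq\mathbb B(K\otimes\mathfrak A)$. Now I invoke the von Neumann algebra hypothesis: the unit ball of $\mathbb B(K\otimes\mathfrak A)$ is weak-$*$ compact, so the net $(\tilde\phi_\lambda)$ admits a cluster point $\tilde\phi:G\to\mathbb B(K\otimes\mathfrak A)$ for the topology of pointwise weak-$*$ convergence. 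Complete positivity and contractivity are closed conditions for this topology, and the right action $t\mapsto t\cdot\alpha$ coincides with right multiplication by the fixed operator $\sigma(\alpha)\in\mathbb B(K\otimes\mathfrak A)$ implementing the left $\mathfrak A$-action of Lemma \ref{rep2}, hence is weak-$*$ continuous; thus $\tilde\phi$ is again a c.c.p. module map. Finally, for $x\in F$ one has $\tilde\phi_\lambda(x)=p_\lambda\phi(x)p_\lambda\to\phi(x)$ weak-$*$, because $p_\lambda\to 1$ strictly and $\phi(x)$ is adjointable, so on $F$ the net already converges to $\phi$ and $\tilde\phi$ restricts to $\phi$. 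This produces the required extension, proving injectivity of $\mathbb B(K\otimes\mathfrak A)$.

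The main obstacle is the passage to infinite-dimensional $K$: one must guarantee that a \emph{single} cluster point is simultaneously completely positive, an $\mathfrak A$-module map, and a genuine extension of $\phi$. The first two properties survive point-weak-$*$ limits, the module property precisely because the action is normal (this is where the von Neumann hypothesis is essential), while the restriction is forced since on $F$ the approximating net converges rather than merely clusters. A secondary technical point, to be checked directly from the definitions in Lemma \ref{rep2}, is that the identifications $M_\lambda\cong\mathbb M_{|\lambda|}(\mathfrak A)$ and the correspondence $\phi\leftrightarrow s_\phi$ are compatible with the right $\mathfrak A$-module structures.
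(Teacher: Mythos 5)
Your proposal is correct and takes essentially the same approach as the paper: compress to the finite-rank corners $p_\lambda\mathbb B(K\otimes\mathfrak A)p_\lambda\cong\mathbb M_{|\lambda|}(\mathfrak A)$ arising from an orthonormal basis of $K$ (the paper phrases this via the frame $\{\xi_i\otimes 1_{\mathfrak A}\}$), extend on each corner using injectivity of $\mathfrak A$ through the Paulsen--Choi matrix trick, and pass to a point-weak-$*$ cluster point using the von Neumann algebra hypothesis. The paper delegates exactly these steps to its citation of Lemma 3.7 of the companion paper and to ``the proof of the classical Arveson extension theorem''; you have simply written them out in full.
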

\begin{proof}
As in the proof of the module version of Arveson extension theorem \cite[Lemma  3.7]{a2}, it is enough to note that, for an orthonormal basis $\{\xi_i\}$ of $K$, the set $\{\xi_i\otimes 1_{\mathfrak A}\}$  is a frame for the Hilbert $\mathfrak A$-module $K\otimes \mathfrak A$. This gives a net of finite rank projections $(q_i)$ (say with rank $k(i)$) in $\mathbb B(K\otimes \mathfrak A)$, tending to the identity in $SOT$, such that $q_i\mathbb B(K\otimes \mathfrak A)q_i=\mathbb M_{k(i)}(\mathfrak A)$. The rest goes as in the proof of the classical Arveson extension theorem.  
\end{proof}

Note that,
$$\mathbb B(K\otimes \mathfrak A)=M(\mathbb K(K)\otimes \mathfrak A)\subseteq \mathbb B(K)\bar\otimes \mathfrak A^{**}.$$
Also, $\mathbb B(K\otimes \mathfrak A)$ is a von Neumann algebra when $\mathfrak A$ is so and $K\otimes \mathfrak A$ is self dual.

Let  $\tau$ be an $\mathfrak A$-trace, and consider the left and right regular representations $\pi_\tau$ and $\pi^{op}_\tau$ on $L^2(A,\tau)$. Since the ranges of these representations commute, we may consider the representation $\pi_\tau\otimes\pi_\tau^{op}: A\odot A^{op}\to \mathbb B(L^2(A,\tau))$. Composing this with the $\mathfrak A$-retraction $$\theta: x\in \mathbb B(L^2(A,\tau))\mapsto\langle x\hat1, \hat 1\rangle\in\mathfrak A$$
we get a map $\mu_\tau: A\odot A^{op}\to\mathfrak A$. Both of these maps are $max$-continuous  (by universality) on  $A\odot A^{op}$. The natural question is that when these are $min$-continuous.
The main result of this section answers this and gives  sufficient conditions for amenability of $\tau$. The proof resembles that of \cite[3.1.6]{b}, but there are lots of technicalities, due to working with vector valued maps, which should be taken care of. For $x\in \mathbb M_{n}(\mathfrak A)$, we put, $$\|x\|_2:=(tr_n\otimes id)(x^*x)^{\frac{1}{2}}\in\mathfrak A^{+}.$$

\begin{theorem}\label{main}  Let $\tau$ be an $\mathfrak A$-trace on a $C^*$-algebra $A$. Consider the following assertions:

$(i)$ $\tau$ is amenable,

$(ii)$ there exists a sequence of c.c.p. module maps $\phi_n: A \to \mathbb M_{k(n)}(\mathfrak A)$ such that
$(tr_{k(n)}\otimes id)\circ\phi_n\to\tau$ in point-norm topology on $A$, as $n\to\infty$, and also $\|\phi_n(ab)-\phi_n(a)\phi_n(b)\|_2\to 0$ in $\mathfrak A$, as $n\to\infty$,
for all $a, b\in A$,

$(iii)$ the positive linear map $\mu_\tau$ on $A\odot A^{op}$ is min-continuous on $A \odot A^{op}$,

$(iv)$ the representation $\pi_\tau\otimes\pi_\tau^{op}: A\odot A^{op}\to \mathbb B(L^2(A,\tau))$ is min-continuous,

$(v)$ for any faithful representation $A\subseteq \mathbb B(Y)$, there exists a c.c.p. module map $\Phi:
\mathbb B(Y)\to\pi_\tau(A)^{''}$ extending $\pi_\tau$.

Then $(ii)\Rightarrow(iii)\Rightarrow(iv)$ and $(v)\Rightarrow(i)$. If moreover, $\mathbb B(L^2(A,\tau))$ is an injective object in the category of $\mathfrak A$-modules and c.p. module maps, then $(iv)\Rightarrow(v)$.
\end{theorem}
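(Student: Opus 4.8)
The plan is to prove the four groups of implications separately, following the circle of ideas in the scalar case \cite[3.1.6]{b} while tracking the $\mathfrak A$-valued data. Throughout I would write $H=L^2(A,\tau)$ and use that $\theta(x)=\langle x\hat1,\hat1\rangle$ satisfies $\mu_\tau=\theta\circ(\pi_\tau\otimes\pi_\tau^{op})$ and $\theta\circ\pi_\tau=\tau$. For $(ii)\Rightarrow(iii)$ I would define $\Psi_n\colon A\odot A^{op}\to\mathfrak A$ by $\Psi_n(a\otimes b^{op})=(tr_{k(n)}\otimes id)(\phi_n(a)\phi_n(b))$ and factor it as $\Psi_n=\omega_n\circ(\phi_n\otimes\phi_n^{op})$, where $\omega_n(x\otimes y^{op})=(tr_{k(n)}\otimes id)(xy)$ is the $\mu$-map of the canonical $\mathfrak A$-trace on $\mathbb M_{k(n)}(\mathfrak A)$. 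Since $\omega_n$ is $\min$-continuous (the standard matricial example) and $\phi_n\otimes\phi_n^{op}$ is $\min$-contractive, each $\Psi_n$ is $\min$-contractive. Writing $\phi_n(a)\phi_n(b)=\phi_n(ab)+r_n$ with $\|r_n\|_2\to0$, the operator Kadison--Schwarz inequality for the unital c.p. map $tr_{k(n)}\otimes id$ gives $\|(tr_{k(n)}\otimes id)(r_n)\|\le\|\,\|r_n\|_2\,\|\to0$, and together with $(tr_{k(n)}\otimes id)\circ\phi_n\to\tau$ this yields $\Psi_n\to\mu_\tau$ in point--norm. Hence $\|\mu_\tau(z)\|=\lim_n\|\Psi_n(z)\|\le\|z\|_{\min}$, so $\mu_\tau$ extends to $A\otimes_{\min}A^{op}$.

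For $(iii)\Rightarrow(iv)$ I would observe that $\hat1$ is cyclic for $\pi_\tau\otimes\pi_\tau^{op}$ (since $(\pi_\tau\otimes\pi_\tau^{op})(a\otimes b^{op})\hat1=\widehat{ab}$ and $A^2$ is dense in $A$) and that $\mu_\tau=v^*(\pi_\tau\otimes\pi_\tau^{op})(\cdot)v$ for $v\colon\mathfrak A\to H$, $v\alpha=\hat1\cdot\alpha$; thus $(\pi_\tau\otimes\pi_\tau^{op},H,v)$ is a minimal KSGNS dilation of $\mu_\tau$. If $\mu_\tau$ is $\min$-continuous it is a c.c.p. module map on $A\otimes_{\min}A^{op}$ whose KSGNS representation is $\min$-continuous, and uniqueness of the minimal dilation identifies this representation with $\pi_\tau\otimes\pi_\tau^{op}$, giving (iv). For $(v)\Rightarrow(i)$, given any faithful $A\subseteq\mathbb B(Y)$ I would take $\Phi$ as in (v) and set $\phi=\theta\circ\Phi$; this is a c.c.p. module map by Lemma \ref{retr}, with $\phi|_A=\theta\circ\pi_\tau=\tau$. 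For a unitary $u\in A+\mathbb C I$ one has $\Phi(u)=\pi_\tau(u)$ and $u$ lies in the multiplicative domain of $\Phi$, so $\Phi(uxu^*)=\pi_\tau(u)\Phi(x)\pi_\tau(u)^*$; invariance then reduces to $\theta(\pi_\tau(u)y\pi_\tau(u)^*)=\theta(y)$ for $y\in\pi_\tau(A)''$, which is exactly the assertion that $\hat1$ is a trace vector and follows from the commutation relations and the conjugation $J$ of Lemma \ref{j}. This is precisely Definition \ref{am}.

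For $(iv)\Rightarrow(v)$ I would fix a faithful $A\subseteq\mathbb B(Y)$ and first use injectivity of $\mathbb B(H)$ to extend $\pi_\tau$, via the module Arveson extension theorem \cite[Lemma 3.7]{a2}, to a c.c.p. module map $\tilde\Phi\colon\mathbb B(Y)\to\mathbb B(H)$ with $\pi_\tau(A)$ in its multiplicative domain. The crux is to manufacture a c.c.p. module conditional expectation $E\colon\mathbb B(H)\to\pi_\tau(A)''$ fixing $\pi_\tau(A)''$; then $\Phi=E\circ\tilde\Phi$ satisfies $\Phi(a)=E(\pi_\tau(a))=\pi_\tau(a)$, which is (v). To build $E$ I would upgrade the $\min$-continuity (iv) to $\min$-continuity of the product map $\pi_\tau(A)''\odot\pi_\tau^{op}(A^{op})\to\mathbb B(H)$, using the commutation theorem $\pi_\tau(A)''=\pi_\tau^{op}(A^{op})'$ of Lemma \ref{j} together with normality and Kaplansky density; this is the module form of the Effros--Lance semidiscreteness criterion for $\pi_\tau(A)''$, and combined with the injectivity of $\mathbb B(H)$ it should produce $E$ as a point-weak$^*$ limit of finite-rank module maps factoring through $\mathbb M_{k}(\mathfrak A)$.

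The hard part will be exactly this last step. Transporting the Effros--Lance/Connes correspondence (the equivalence of $\min$-continuity of the two commuting regular representations with semidiscreteness, hence injectivity, of $\pi_\tau(A)''$) into the $\mathfrak A$-valued category is delicate: one must make sense of semidiscrete approximations by c.p. module maps, control their vector-valued weak$^*$ limits, and verify adjointability and normality of the resulting module maps. This is where the injectivity hypothesis on $\mathbb B(L^2(A,\tau))$ is indispensable and where the technicalities arising from working with vector-valued maps concentrate.
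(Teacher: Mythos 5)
Your arguments for $(ii)\Rightarrow(iii)$, $(iii)\Rightarrow(iv)$ and $(v)\Rightarrow(i)$ are essentially the paper's own: the factorization $\Psi_n=\mu_n\circ(\phi_n\otimes\phi_n^{op})$ with the bound $0\leq|(tr_{k(n)}\otimes id)(x)|\leq\|x\|_2$, the KSGNS uniqueness/universality argument transferring $min$-continuity to $\pi_\tau\otimes\pi_\tau^{op}$, and the compression $\phi=\langle\Phi(\cdot)\hat1,\hat1\rangle$ with the multiplicative-domain and Lemma \ref{j} bookkeeping all match. The problem is $(iv)\Rightarrow(v)$, which is exactly the implication where the hypotheses (iv) and injectivity of $\mathbb B(L^2(A,\tau))$ must do real work, and there your proposal has a genuine gap.

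Your plan is to extend $\pi_\tau$ to $\tilde\Phi\colon\mathbb B(Y)\to\mathbb B(L^2(A,\tau))$ by injectivity and then compress the range into $\pi_\tau(A)''$ by a conditional expectation $E\colon\mathbb B(L^2(A,\tau))\to\pi_\tau(A)''$, manufactured from a module version of the Effros--Lance semidiscreteness criterion. You acknowledge yourself that this last step is unproved, and it is not a technicality one can wave at: producing such an expectation amounts to injectivity (equivalently, in the classical setting, semidiscreteness/hyperfiniteness via Connes' theorem) of $\pi_\tau(A)''$, a deep result whose module analogue the paper explicitly does not have --- indeed the paper states elsewhere that it is unknown whether $\mathfrak A$-injective implies $\mathfrak A$-semidiscrete. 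Moreover, in your scheme hypothesis (iv) is only used to hint at semidiscreteness, so the route is circular in difficulty: you would be deriving (v) from a statement at least as strong as what the whole theorem is circling around. The paper avoids all of this with Lance's trick, which uses (iv) in a different and decisive way: since $\pi_\tau\otimes\pi_\tau^{op}$ is $min$-continuous, it is a c.c.p. map defined on $A\otimes_{min}A^{op}$, which (by injectivity of the minimal tensor norm) sits inside $\mathbb B(Y)\otimes_{min}A^{op}$; injectivity of $\mathbb B(L^2(A,\tau))$ then extends it to a c.p. map $\Psi\colon\mathbb B(Y)\otimes_{min}A^{op}\to\mathbb B(L^2(A,\tau))$ having $A\otimes A^{op}$ in its multiplicative domain. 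Setting $\Phi(x):=\Psi(x\otimes1)$, the multiplicative-domain bimodule property forces $\Phi(x)$ to commute with $\Psi(1\otimes A^{op})=\pi_\tau^{op}(A^{op})$, so
\begin{equation*}
\mathrm{ran}(\Phi)\subseteq\pi_\tau^{op}(A^{op})'=\pi_\tau(A)''
\end{equation*}
by the commutation theorem of Lemma \ref{j}$(iv)$, and $\Phi$ extends $\pi_\tau$ because $A\otimes1$ is in the multiplicative domain. No conditional expectation onto $\pi_\tau(A)''$, and no semidiscreteness, is ever needed: the commutant containment comes for free from tensoring with $A^{op}$ \emph{before} extending, rather than extending first and trying to push the range down afterwards. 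You should replace your construction of $E$ with this argument (after reducing to unital $A$ by passing to unitizations, as the paper does).
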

\begin{proof}
$(ii)\Rightarrow (iii)$. Let $(\phi_n)$ be as in $(ii)$,  consider the c.c.p. module maps $\phi_n^{op}: A^{op} \to \mathbb M_{k(n)}(\mathfrak A)^{op}$, and take the corresponding c.c.p. module map,
$$\phi_n\otimes\phi_n^{op}: A\otimes A^{op} \to \mathbb M_{k(n)}(\mathfrak A)\otimes\mathbb M_{k(n)}(\mathfrak A)^{op}=\mathbb B(L^2(\mathbb M_{k(n)}(\mathfrak A),tr_{k(n)}\otimes id)),$$
and compose it with 
$$\mu_n: x\in \mathbb B(L^2(\mathbb M_{k(n)}(\mathfrak A), tr_{k(n)}\otimes id))\mapsto \langle x\hat 1, \hat 1\rangle,$$
and observe that,
$$\mu_n\circ(\phi_n\otimes\phi_n^{op})(a\otimes b)=\langle\phi_n(a)J\phi_n^{op}(b^*)J\hat 1, \hat 1\rangle=(tr_{k(n)}\otimes id)\big(\phi_n(a)\phi_n(b)\big),$$
for $a,b\in A$.
Since $$0\leq |(tr_{k(n)}\otimes id)(x)|\leq\|x\|_2\quad \big(x\in \mathbb M_{k(n)}(\mathfrak A)\big),$$
as positive elements in $\mathfrak A$, we have
\begin{align*}
\|\mu_n(\phi_n\otimes\phi_n^{op}(a\otimes b))-\mu_\tau(a\otimes b)\|&=\|(tr_{k(n)}\otimes id)\big(\phi_n(a)\phi_n(b)\big)-\langle\pi_{\tau}(a)\pi_{\tau}(b)\hat 1, \hat 1\rangle\|\\
&\leq \|(tr_{k(n)}\otimes id)\big(\phi_n(a)\phi_n(b)-\phi_n(ab)\big)\|\\&-\|(tr_{k(n)}\otimes id)\big(\phi_n(ab)\big)-\tau(ab)\|\to 0,
\end{align*}
as $n\to\infty$,
for all $a, b\in A$. Since $\mu_\tau$ is point-norm limit of $min$-continuous maps, it is $min$-continuous.

$(iii)\Rightarrow (iv)$. By $(iii)$, $\mu_\tau$ extends to an $\mathfrak A$-retraction on $A\otimes A^{op}$, and so it has a KSGNS-representation,
$$\sigma: A\otimes A^{op}\to \mathbb B(L^2(A\otimes A^{op},\mu_\tau)).$$
By the universality of the KSGNS-representation \cite[Theorem 5.6]{lan}, there is a unitary $u\in \mathbb B\big(L^2(A\otimes A^{op},\mu_\tau), L^2(A,\tau)\big)$ such that $\pi_\tau\otimes\pi_\tau^{op}=u\sigma(\cdot)u^*$ on $A\odot A^{op}$. Thus the left hand side is $min$-continuous on $A\odot A^{op}$.

$(iv)\Rightarrow (v)$. Assume that $\mathbb B(L^2(A,\tau))$ is an injective object in the category of $\mathfrak A$-modules and c.p. module maps. By going to unitizations, we may assume that $A$ is unital. As in the classical case, one could use the Lance's trick: $A\otimes A^{op}\subseteq \mathbb B(Y)\otimes A^{op}$ and  $\pi_\tau \otimes \pi_\tau^{op}$ extends to a c.p. map $\Psi: \mathbb B(Y)\otimes A^{op}\to \mathbb B(L^2(A,\tau))$, having $A\otimes A^{op}$ in its multiplicative domain. Put, $\Phi(x)=\Psi(x\otimes 1)$, where $1$ is the unit of $A$. Then
$$\text{ran}(\Phi)\subseteq \Psi(\mathbb C1\otimes A^{op})=\pi_\tau^{op}(A)^{'}=\pi_\tau(A)^{''}\subseteq \pi_\tau(A)^{''}.$$

$(v)\Rightarrow (i)$. For any faithful representation $A\subseteq \mathbb B(Y)$, let $\Phi:
\mathbb B(Y)\to\pi_\tau(A)^{''}$ extend $\pi_\tau$ and put $\phi=\langle \Phi(\cdot)\hat 1,\hat 1)$. Since $A$ is in the multiplicative domain of $\Phi$, this is a c.c.p.  module map extending $\tau$, which is invariant under conjugation by unitaries in $A+\mathbb C I$, where $I$ is the identity operator on $Y$.
\end{proof}

We don't know if $(i)\Rightarrow (ii)$. The proof in the classical case uses approximation of state of $\mathbb B(H)$ (for a Hilbert space $H$) with normal states coming from finite rank positive elements, which has no counterpart in the vector valued case.

We recall that $A$ has $\mathfrak A$-WEP if for every faithful representation and module map $A\subseteq \mathbb B(H\otimes \mathfrak A)$ for a Hilbert space $H$, there is a u.c.p. admissible map $\varphi: \mathbb B(H\otimes A)\to A^{**}$ extending the identity on $A$ \cite{a2}. For definition of admissible maps, see \cite[Definition 2.1]{a2}.

\begin{proposition}
Assume that $\mathfrak A$ is a unital injective $C^*$-algebra such that for a minimal Stinespring dilation $(\pi,K,V)$  as in Lemma \ref{rep2}, $\mathbb B(K\otimes \mathfrak A)$ is a von Neumann algebra. If $A$ has $\mathfrak A$-WEP, then each  $\mathfrak A$-trace on $A$ is amenable.
\end{proposition}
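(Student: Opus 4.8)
The plan is to reduce everything to a single, concretely chosen representation and then invoke Lemma \ref{rep}. I would fix an $\mathfrak A$-trace $\tau$ on $A$ and, using Lemma \ref{rep2}, represent $A$ faithfully as a module in $\mathbb B(K\otimes\mathfrak A)$, where $(\pi,K,V)$ is the minimal Stinespring dilation appearing there. By hypothesis $\mathbb B(K\otimes\mathfrak A)$ is a von Neumann algebra, so the machinery of Lemma \ref{inj} is available. The whole point is to verify, for this one representation, the two hypotheses of Lemma \ref{rep}: that $\mathbb B(K\otimes\mathfrak A)$ is an injective $\mathfrak A$-module, and that $\tau$ admits an invariant c.c.p.\ module extension to $\mathbb B(K\otimes\mathfrak A)$. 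Amenability for every other representation then follows for free from the conclusion of Lemma \ref{rep}.

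For injectivity, I would first upgrade the hypothesis that $\mathfrak A$ is an injective $C^*$-algebra to the statement that $\mathfrak A$ is an injective object in the category of operator $\mathfrak A$-modules with c.c.p.\ module maps. This is the step where I expect the real work to lie: a bare injective $C^*$-algebra only yields extensions of c.c.p.\ maps at the level of operator systems (Arveson), and one must further produce extensions that respect the right $\mathfrak A$-action. I would obtain this from a Wittstock-type module extension theorem over injective $C^*$-algebras, in the form already used for the module Arveson theorem in \cite[Lemma 3.7]{a2}. Granting module injectivity of $\mathfrak A$, hence of each $\mathbb M_{k(i)}(\mathfrak A)$, Lemma \ref{inj} then applies verbatim and delivers that $\mathbb B(K\otimes\mathfrak A)$ is an injective $\mathfrak A$-module.

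For the invariant extension, I would feed the representation $A\subseteq \mathbb B(K\otimes\mathfrak A)$, which is exactly of the form $\mathbb B(H\otimes\mathfrak A)$ demanded by the definition of $\mathfrak A$-WEP, into the hypothesis that $A$ has $\mathfrak A$-WEP. This produces a u.c.p.\ admissible module map $\Psi:\mathbb B(K\otimes\mathfrak A)\to A^{**}$ restricting to the identity on $A$. Composing with the normal extension $\bar\pi_\tau: A^{**}\to\pi_\tau(A)''$ of the GNS representation of $\tau$, I obtain a c.c.p.\ module map $\Phi:=\bar\pi_\tau\circ\Psi:\mathbb B(K\otimes\mathfrak A)\to\pi_\tau(A)''$ which restricts to $\pi_\tau$ on $A$ and has $A+\mathbb C I$ in its multiplicative domain. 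This is precisely condition $(v)$ of Theorem \ref{main} for this representation, and the argument of $(v)\Rightarrow(i)$ shows that $\phi:=\langle\Phi(\cdot)\hat 1,\hat 1\rangle:\mathbb B(K\otimes\mathfrak A)\to\mathfrak A$ is a c.c.p.\ module extension of $\tau$ invariant under conjugation by unitaries in $A+\mathbb C I$, the invariance being exactly the fact that $\langle\cdot\,\hat 1,\hat 1\rangle$ is a normal $\mathfrak A$-valued trace on $\pi_\tau(A)''$ (Lemma \ref{j}).

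With both hypotheses of Lemma \ref{rep} verified for the representation on $K\otimes\mathfrak A$, that lemma immediately yields that $\tau$ is amenable. The main obstacle, as indicated, is the passage from injectivity of $\mathfrak A$ as a $C^*$-algebra to injectivity as a module object; a secondary point requiring care is checking that $\Psi$ (through admissibility) and $\bar\pi_\tau$ are genuine right $\mathfrak A$-module maps, so that $\Phi$, and hence $\phi$, are module maps landing in $\mathfrak A$ rather than merely in $\mathfrak A^{**}$.
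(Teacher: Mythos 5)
Your overall skeleton matches the paper's: represent $A$ faithfully in $\mathbb B(K\otimes\mathfrak A)$ via Lemma \ref{rep2}, feed that representation into the $\mathfrak A$-WEP hypothesis to get a u.c.p.\ module map $\Psi:\mathbb B(K\otimes\mathfrak A)\to A^{**}$ restricting to the identity on $A$, manufacture from it an invariant c.c.p.\ module extension $\phi:\mathbb B(K\otimes\mathfrak A)\to\mathfrak A$ of $\tau$, and conclude with Lemmas \ref{inj} and \ref{rep}; your handling of module injectivity of $\mathfrak A$ (the Wittstock/Frank--Paulsen upgrade, as in \cite[Lemma 3.7]{a2}) is also consistent with what the paper implicitly relies on. The genuine gap is in the middle step: you compose $\Psi$ with ``the normal extension $\bar\pi_\tau:A^{**}\to\pi_\tau(A)''$ of the GNS representation of $\tau$.'' No such extension is available in this setting. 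The universal property of $A^{**}$ produces normal extensions only for representations on Hilbert spaces, equivalently $*$-homomorphisms into von Neumann algebras; but $\pi_\tau$ acts on the Hilbert $\mathfrak A$-module $L^2(A,\tau)$, and under the hypotheses of the proposition $\mathfrak A$ is merely a unital injective $C^*$-algebra and nothing forces $L^2(A,\tau)$ to be self-dual. Hence $\mathbb B(L^2(A,\tau))$, and with it the double commutant $\pi_\tau(A)''$ taken inside it, need not be a von Neumann algebra or even a dual space --- the paper explicitly notes this holds only when $\mathfrak A$ is a von Neumann algebra and the module is $\tau$-self dual, and the hypothesis that $\mathbb B(K\otimes\mathfrak A)$ is a von Neumann algebra says nothing about $L^2(A,\tau)$. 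So $\Phi=\bar\pi_\tau\circ\Psi$ is undefined, and your verification of condition $(v)$ of Theorem \ref{main}, together with the extension $\phi=\langle\Phi(\cdot)\hat 1,\hat 1\rangle$, collapses with it.

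The paper sidesteps exactly this obstruction by never invoking $\pi_\tau$ at all: it uses the bitranspose $\tau^{**}:A^{**}\to\mathfrak A^{**}$, which exists and is weak${}^*$-continuous for any bounded map, composes it with the WEP map to get a map $\mathbb B(K\otimes\mathfrak A)\to\mathfrak A^{**}$ (packaged there as the restriction $\psi$ of $(\tau^{**}\circ\Phi)\otimes{\rm id}$), and then corrects the range from $\mathfrak A^{**}$ back down to $\mathfrak A$ by a conditional expectation $\mathbb E:\mathfrak A^{**}\bar\otimes\mathfrak A^{**}\to\mathfrak A$. This is where the injectivity of $\mathfrak A$ enters a second time, beyond the module-injectivity upgrade you describe; invariance under conjugation by unitaries of $A+\mathbb C I$ then follows from the multiplicative-domain argument together with traciality of $\tau^{**}$ (inherited from $\tau$ by weak${}^*$ density), with no appeal to Lemma \ref{j} or to any normality on $\pi_\tau(A)''$. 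If you replace your $\bar\pi_\tau\circ\Psi$ device with this $\tau^{**}$-plus-conditional-expectation construction, the remainder of your argument (Lemma \ref{inj} for injectivity of $\mathbb B(K\otimes\mathfrak A)$ as an $\mathfrak A$-module, then Lemma \ref{rep} to pass to arbitrary faithful representations) goes through and recovers the paper's proof.
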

\begin{proof}
We identify $A$ with its image under a  faithful representation in $\mathbb B(K\otimes \mathfrak A)$. Since $A$ has $\mathfrak A$-WEP, there is a u.c.p. module map $\Phi: \mathbb B(K\otimes \mathfrak A)\to A^{**}$, extending the identity map on $A$. For an $\mathfrak A$-trace $\tau:A\to \mathfrak A$, let $\psi$ be the restriction of $$(\tau^{**}\circ\Phi)\otimes {\rm id}: \mathbb B(K\otimes \mathfrak A)\bar\otimes  \mathfrak A^{**}\to \mathfrak A^{**}\bar\otimes \mathfrak A^{**}$$
to  $\mathbb B(K\otimes \mathfrak A)\bar\otimes \mathbb C1_{\mathfrak A^{**}}$, canonically identified with $\mathbb B(K\otimes \mathfrak A)$. Let $\mathbb E: \mathfrak A^{**}\bar\otimes \mathfrak A^{**}\to \mathfrak A$ be a conditional expectation and put $$\phi:=\mathbb E\circ\psi: \mathbb B(K\otimes \mathfrak A)\to \mathfrak A.$$
Since $A$ is in the multiplicative domain of $\Phi$, it is also in the multiplicative domain of $\psi$, hence $\phi$ is invariant under conjugation by unitaries of $A+\mathbb CI$. Finally, since $\mathbb E$ is identity on its range, $\phi$ extends $\tau$. The result now follows from Lemmas \ref{rep}, \ref{inj}.
\end{proof}

\section{quasidiagonality}

In this section we explore the module version of the notion of quasidiagonal (QD) $C^*$-algebras.

\begin{definition} \label{qd}
A $C^*$-module $A$ is called $\mathfrak A$-{\it quasidiagonal} (briefly, $\mathfrak A$-QD) if there exists a net of admissible c.c.p. maps $\phi_n: A \to \mathbb M_{k(n)}(\mathfrak A)$ which are approximately multiplicative and approximately isometric, i.e.,
$\|\phi_n(ab)-\phi_n(a)\phi_n(b)\|\to 0$ and $\|\phi_n(a)\|\to \|a\|,$ as $n\to\infty$,
for all $a, b\in A$; or equivalently, if for each finite set $\mathfrak F\subseteq A$ and $\varepsilon>0$, there is a positive integer $k$ and a c.c.p. module map $\phi: A \to \mathbb M_{k}(\mathfrak A)$ satisfying $\|\phi(ab)-\phi(a)\phi(b)\|<\varepsilon$ and $\|\phi(a)\|> \|a\|-\varepsilon,$
for all $a, b\in \mathfrak F$.
\end{definition}

When $A$ is unital and $\mathfrak A$ is a von Neumann algebra, an argument similar to that of \cite[7.1.4]{bo} shows that we may take the maps $\phi_n$ in the above definition to be u.c.p. module maps (if they exist).

Clearly $\mathbb M_{n}(\mathfrak A)$ is $\mathfrak A$-QD, for each positive integer $n$. As another immediate example, if $B=C_0(X)$ is a commutative $C^*$-algebra, then $A=B\otimes \mathfrak A=C_0(X, \mathfrak A)$ (with the right module action by multiplication) is $\mathfrak A$-QD (just take direct sums of point evaluations). More generally, we have the following notion.

\begin{definition} \label{rfd}
A $C^*$-module $A$ is called $\mathfrak A$-{\it residually finite dimensional} (briefly, $\mathfrak A$-RFD) if there is a net of $*$-homomorphisms and module maps $\pi_n: A \to \mathbb M_{k(n)}(\mathfrak A)$ such that $\oplus \pi_n: A\to \prod_{n} \mathbb M_{k(n)}(\mathfrak A)$ is faithful.
\end{definition}

Clearly each $\mathfrak A$-RFD $C^*$-module is also $\mathfrak A$-QD. Also if $B$ is an RFD $C^*$-algebra, then $A=B\otimes \mathfrak A$ is $\mathfrak A$-RFD (and in particular, this holds if $B$ is a Type I $C^*$-algebra). The property of being $\mathfrak A$-QD passes to direct products and subalgebras (which are also submodules) and so it also passes to direct sums. When $\mathfrak A$ is injective in the category of $\mathfrak A$-modules with c.c.p. module maps, then it also passes to direct limits with injective connecting maps (just as in  \cite[7.1.9]{bo}). Also, it behaves well wit respect to the minimal module tensor products, defined in \cite[Section 3]{a2}: if $A$ and $B$ are $\mathfrak A$-QD, so is $A\otimes_{\mathfrak A}^{min} B$ (c.f. \cite[7.1.12]{bo}).

We say that $A$ is  $\mathfrak A$-{\it stably finite} (briefly, $\mathfrak A$-SF) if $\mathbb M_{n}(\mathfrak A)\otimes_{\mathfrak A}^{min} A$ contains no proper isometry (i.e., an isometry $s$ with $ss^*\neq 1$), for each positive integer $n$. Similar to \cite[7.1.15]{bo}, if $A$ is $\mathfrak A$-QD, it is also $\mathfrak A$-SF.

\begin{definition} \label{qd2}
For a left Hilbert $\mathfrak A$-module $Y$, a subset $\Omega\subseteq \mathbb B(Y)$ is called {\it quasidiagonal} if for each finite sets $\mathfrak F\subseteq \Omega$ and $\mathfrak Y\subseteq Y$ and each $\varepsilon>0$ there is a finite rank projection $P\in\mathbb B(Y)$ such that $\|PT-TP\|<\varepsilon$ and $P=I$ on $\mathfrak Y$.
\end{definition}

Note that here, a finite rank operator is one whose range is a finitely generated submodule of $Y$. Also, unlike the classical case, since the submodules of $Y$ are not necessarily complemented, we may not assume that $\|Pv-v\|<\varepsilon$ for $v\in\mathfrak Y$ and prove the above stronger assumption by modifying $P$ with an orthogonal projection (c.f. the proof of \cite[7.2.3]{bo}). However, as in the above cited result, the stronger assumption gives the following property.

\begin{lemma} \label{proj} Let $Y$ be a countably generated Hilbert $\mathfrak A$-module and $\Omega\subseteq \mathbb B(Y)$ be norm separable and quasidiagonal. Then there is an increasing sequence $(P_n)$ of finite rank projections, converging strongly to the identity $I$ on $Y$, such that $\|[P_n,T]\|\to 0$, for $T\in\Omega$.
\end{lemma}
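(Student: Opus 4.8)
The plan is to build $(P_n)$ by an exhaustion (diagonal) argument, feeding the quasidiagonality condition of Definition \ref{qd2} with ever larger finite data and forcing monotonicity directly through the strong requirement ``$P=I$ on $\mathfrak Y$''. First I would fix a norm-dense sequence $\{T_1,T_2,\dots\}$ in $\Omega$ (possible since $\Omega$ is norm separable) and a sequence $\{y_1,y_2,\dots\}\subseteq Y$ whose $\mathfrak A$-linear span is dense in $Y$ (possible since $Y$ is countably generated). These sequences control, respectively, the commutator estimates and the strong convergence to $I$.

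The inductive step runs as follows. Suppose finite rank projections $P_1\le\cdots\le P_{n-1}$ have been constructed. Since $P_{n-1}$ is finite rank, its range is a finitely generated submodule, so I may choose finitely many vectors $w_1,\dots,w_m$ whose $\mathfrak A$-linear span is dense in $\mathrm{ran}(P_{n-1})$. Now apply Definition \ref{qd2} to the finite sets $\mathfrak F=\{T_1,\dots,T_n\}$ and $\mathfrak Y=\{y_1,\dots,y_n,w_1,\dots,w_m\}$ with $\varepsilon=1/n$, obtaining a finite rank projection $P_n$ with $\|[P_n,T_k]\|<1/n$ for all $k\le n$ and $P_n v=v$ for all $v\in\mathfrak Y$.

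The step I expect to be the main obstacle is verifying that this choice is genuinely increasing, $P_n\ge P_{n-1}$, since in the module setting the usual ``cut down by an orthogonal complement'' correction is unavailable (submodules need not be complemented). This is precisely why Definition \ref{qd2} was stated with the strong condition $P=I$ on $\mathfrak Y$ rather than $\|Pv-v\|<\varepsilon$. Because $P_n$ is a bounded module map fixing each $w_i$, it fixes the $\mathfrak A$-span of the $w_i$ and, by continuity, all of $\mathrm{ran}(P_{n-1})$; hence $P_nP_{n-1}=P_{n-1}$. Taking adjoints gives $P_{n-1}P_n=P_{n-1}$ as well, so $(P_n-P_{n-1})^2=P_n-P_{n-1}=(P_n-P_{n-1})^*$, which shows $P_n-P_{n-1}$ is a projection and $P_n\ge P_{n-1}$.

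It remains to check the two convergence statements, both by routine density arguments. For strong convergence, $P_ny_k=y_k$ for every $n\ge k$, so $P_n$ fixes $y_k\cdot\alpha$ for all $\alpha\in\mathfrak A$; as $\|P_n\|\le 1$ and the $\mathfrak A$-span of $\{y_k\}$ is dense in $Y$, an $\varepsilon/3$-argument yields $P_nv\to v$ for every $v\in Y$, i.e. $P_n\to I$ strongly. For the commutators, given $T\in\Omega$ and $\varepsilon>0$, choose $T_k$ with $\|T-T_k\|<\varepsilon/3$; then for $n\ge k$ one has $\|[P_n,T]\|\le\|[P_n,T_k]\|+2\|T-T_k\|<1/n+2\varepsilon/3$, which is below $\varepsilon$ for large $n$. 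Thus $\|[P_n,T]\|\to 0$ for every $T\in\Omega$, completing the construction.
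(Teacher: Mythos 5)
Your proposal is correct and is precisely the argument the paper intends: the paper omits a written proof, remarking only that the strengthened condition ``$P=I$ on $\mathfrak Y$'' in Definition \ref{qd2} lets the classical exhaustion argument of \cite[7.2.3]{bo} go through, and your key step --- feeding the generators of $\mathrm{ran}(P_{n-1})$ into $\mathfrak Y$ so that $P_nP_{n-1}=P_{n-1}$, whence $P_n-P_{n-1}$ is a projection and $P_n\geq P_{n-1}$ --- is exactly how that remark is meant to be implemented, since the orthogonal-complement perturbation used classically is unavailable for Hilbert modules. The remaining density/$\varepsilon$-arguments for strong convergence and vanishing commutators are routine and correctly carried out.
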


\begin{definition} \label{qd2}
A representation $\pi:A\to\mathbb B(Y)$ in a left Hilbert $\mathfrak A$-module $Y$ is called {\it quasidiagonal} if there is a sequence $(P_n)\subseteq \mathbb B(Y)$ of projections such that $P_n\pi(a)-\pi(a)P_n\in\mathbb K(Y)$ and $\|P_n\pi(a)-\pi(a)P_n\|\to 0$, as $n\to\infty$, for each $a\in A$. It is called {\it strongly quasidiagonal} if $\Omega:=\pi(A)$ is quasidiagonal set of adjointable operators.
\end{definition}

It is well known that the two notions are equivalent in the case of (separable) Hilbert space representations. We have the following weaker version of the classical result of Voiculescu \cite[7.2.5]{bo}.

\begin{theorem} [Voiculescu] \label{voi}
Assume that $\mathfrak A$ is unital and $A$ is unital and separable. The following are equivalent.

$(i)$ $A$ has a faithful strongly QD representation modulo the compacts in  $H\otimes\mathfrak A$, where $H$ is a separable Hilbert space,

$(ii)$ $A$ is $\mathfrak A$-QD with u.c.p. asymptotically multiplicative and isometric module maps.
\end{theorem}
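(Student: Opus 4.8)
My plan is to prove the two implications separately, treating $(i)\Rightarrow(ii)$ as a compression argument and $(ii)\Rightarrow(i)$ as a dilation/assembly argument built on the block-diagonal structure.

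For $(i)\Rightarrow(ii)$: suppose $\pi:A\to\mathbb B(H\otimes\mathfrak A)$ is a faithful strongly QD representation. Since $A$ is separable, $\pi(A)$ is norm separable and $H\otimes\mathfrak A$ is countably generated, so Lemma \ref{proj} yields an increasing sequence of finite rank module projections $P_n\nearrow I$ (strongly) with $\|[P_n,\pi(a)]\|\to 0$ for every $a\in A$. Each corner $P_n\mathbb B(H\otimes\mathfrak A)P_n$ is isomorphic to $\mathbb M_{k(n)}(\mathfrak A)$ with $k(n)=\mathrm{rank}(P_n)$, so I set $\phi_n(a)=P_n\pi(a)P_n$. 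These are u.c.p. module maps (unital because $\pi$ and $P_n$ are), and the identity $\phi_n(ab)-\phi_n(a)\phi_n(b)=P_n\pi(a)(I-P_n)\pi(b)P_n$ together with $\|(I-P_n)\pi(b)P_n\|=\|(I-P_n)[P_n,\pi(b)]\|\to 0$ gives asymptotic multiplicativity. Asymptotic isometry follows from $\|\phi_n(a)\|\le\|a\|$ and lower semicontinuity of the norm under the strong convergence $P_n\pi(a)P_n\to\pi(a)$, using faithfulness of $\pi$. The only module-specific points are that the $P_n$ are adjointable and $\mathfrak A$-linear and that the corner is genuinely $\mathbb M_{k(n)}(\mathfrak A)$, both of which hold because the projections of Lemma \ref{proj} project onto finitely generated (free) submodules.

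For $(ii)\Rightarrow(i)$: starting from the u.c.p. module maps $\phi_n:A\to\mathbb M_{k(n)}(\mathfrak A)$ of Definition \ref{qd}, I assemble $\Phi=\bigoplus_n\phi_n$, realized as block-diagonal operators on $Y=H\otimes\mathfrak A$ with $H=\bigoplus_n\mathbb C^{k(n)}$. Writing $P_N$ for the projection onto the first $N$ blocks, $\Phi(a)$ commutes exactly with each $P_N$; and since the $c_0$-sum $\bigoplus_n\mathbb M_{k(n)}(\mathfrak A)$ sits inside $\mathbb K(Y)$, asymptotic multiplicativity gives $\Phi(ab)-\Phi(a)\Phi(b)\in\mathbb K(Y)$ while asymptotic isometry gives $\|\Phi(a)+\mathbb K(Y)\|=\limsup_n\|\phi_n(a)\|=\|a\|$. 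Hence $\Phi$ descends to a faithful unital $*$-homomorphism and module map into the Calkin-type algebra $\mathbb B(Y)/\mathbb K(Y)$, with the block-diagonal completely positive lift $\Phi$ furnishing the ``quasidiagonal modulo compacts'' data. To upgrade this to an honest faithful representation that is a QD set of operators, I take the minimal KSGNS dilation $(\pi,\mathcal K,V)$ of $\Phi$ \cite[Theorem 5.6]{lan}; faithfulness of $\pi$ follows from isometry of $\Phi=V^*\pi(\cdot)V$ by evaluating at $a^*a$, and the decomposition $\pi=\bigoplus_n\rho_n$ into the dilations of the individual $\phi_n$ produces the key estimate $\|(I-Q_n)\rho_n(a)Q_n\|^2=\|\phi_n(a^*a)-\phi_n(a^*)\phi_n(a)\|\to 0$, where $Q_n=V_nV_n^*$ is the rank-$k(n)$ projection onto the $n$-th dilated block.

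The main obstacle is precisely this last step: producing a \emph{single} increasing sequence of finite rank module projections converging strongly to $I$ on $\mathcal K$ with vanishing commutators against all of $\pi(A)$. The tension is the classical one, namely that the small projections $Q_n$ commute well with $\rho_n$ only for large $n$, whereas exhausting the identity forces one to fill out the (possibly infinite rank) dilated blocks where no good commutation is available a priori. I would resolve this by a staircase/telescoping manipulation in the spirit of \cite[7.2.3, 7.2.5]{bo}, feeding the estimate above into Lemma \ref{proj} and arranging the dilation to be a compact perturbation of the block-diagonal $\Phi$, so that each commutator $[P_N,\pi(a)]$ is compact and tends to $0$. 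Two module-specific cautions must be respected throughout: finite rank means finitely generated range, and because submodules need not be complemented one must retain the strong requirement that the projection equal $I$ on the prescribed finite set rather than the weaker norm estimate, exactly as flagged before Lemma \ref{proj}.
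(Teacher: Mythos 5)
Your two implications begin exactly as the paper's proof does, and up to a point they \emph{are} the paper's proof. For $(i)\Rightarrow(ii)$, the compression $\phi_n=P_n\pi(\cdot)P_n$ by the projections of Lemma \ref{proj}, with multiplicativity defect $P_n\pi(a)(I-P_n)\pi(b)P_n$ and asymptotic isometry from strong convergence plus faithfulness, is precisely the paper's argument. For $(ii)\Rightarrow(i)$, the paper does exactly your first step: it forms $\Phi=\oplus_n\phi_n$ acting block-diagonally on $\oplus_n\ell^2_{k(n)}\otimes\mathfrak A$, notes that asymptotic multiplicativity and isometry make $\Phi$ multiplicative and isometric modulo $\mathbb K(Y)$, and that $\Phi(A)$ commutes exactly with the block projections --- \emph{and then it stops}. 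The point you missed is that statement $(i)$ asks only for a representation \emph{modulo the compacts}: the required object is the block-diagonal map $\Phi$ itself, which becomes a faithful $*$-homomorphism after composing with the quotient onto $\mathbb B(Y)/\mathbb K(Y)$, together with strong quasidiagonality of the set $\Phi(A)$. This is why the paper calls Theorem \ref{voi} a ``weaker version'' of the classical result \cite[7.2.5]{bo}: the statement is deliberately formulated so that the block-diagonal picture already \emph{is} the conclusion.

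The KSGNS ``upgrade'' you then attempt is therefore not required by the statement --- and it is also the one place where your proposal has a genuine gap, which you flag honestly but do not close. Producing an honest faithful representation whose image is a QD set of operators out of the data in $(ii)$ is not a matter of staircase/telescoping bookkeeping: classically, this passage is exactly the content of Voiculescu's Weyl--von Neumann absorption theorem, and in the module category the corresponding absorption result is only known under extra hypotheses ($\mathfrak A$ $\sigma$-unital and nuclear), by Kasparov \cite[Theorems 5 and 6]{k}. The paper says precisely this in the remark immediately following Theorem \ref{voi}, and explicitly leaves open whether $(ii)$ implies that faithful essential representations are QD. Concretely, your per-block estimate $\|(I-Q_n)\rho_n(a)Q_n\|^2=\|\phi_n(a^*a)-\phi_n(a^*)\phi_n(a)\|$ controls commutators only in the $n$-th dilated block and only for large $n$; any increasing sequence of finite rank projections exhausting $\mathcal K$ must also fill out the early dilated blocks (which need not even be finitely generated), where you have no control --- this would require each $\rho_n$ itself to be strongly QD, which is circular. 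Nor can Lemma \ref{proj} be ``fed'' these estimates: quasidiagonality of the set $\pi(A)$ is that lemma's \emph{hypothesis}, not its conclusion. Delete the upgrade, and your argument coincides with the paper's proof.
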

\begin{proof}
$(i)\Rightarrow(ii)$. If $\pi:A\to \mathbb B(H\otimes\mathfrak A)$ is a faithful strongly QD representation, then by Lemma \ref{proj}, there is an increasing sequence $(P_n)$ of finite rank projections, say of rank $k(n)$, converging SOT to the identity $I$, such that, $\|[P_n,\pi(a)]\|\to 0$, for $a\in A$. Now the c.c.p. module maps $\phi_n: A\to P_n\mathbb B(H\otimes\mathfrak A)P_n\cong \mathbb M_{k(n)}(\mathfrak A)$ are asymptotically multiplicative and isometric.

$(ii)\Rightarrow(i)$. Let $\phi_n: A\to \mathbb M_{k(n)}(\mathfrak A)$ are u.c.p. asymptotically multiplicative and isometric module maps. Then
$$\Phi:=\oplus \phi_n: A\to \prod_{n=1}^{\infty}\mathbb M_{k(n)}(\mathfrak A)\subseteq\mathbb B\big(\oplus_{n=1}^{\infty}\ell^2_{k(n)}\otimes\mathfrak A\big)$$
a faithful  representation modulo the compacts, and for the canonical orthogonal projections $p_n: \ell^2\to \ell^2_{k(n)}$, $\Phi$ becomes strongly QD with the finite rank projections $P_n=p_n\otimes id$.
\end{proof}

Note that when $\mathfrak A$ is a von Neumann algebra, in $(ii)$ we could guarantee that the corresponding maps are also u.c.p. (not just c.c.p.). We don't know however if $(ii)$ implies that every faithful unital essential representation of $A$ in a countably generated $\mathfrak A$-correspondence $Y$ is QD. There are partial results in this direction by Kasparov: if $\mathfrak A$ is $\sigma$-unital and nuclear and $A$ is unital and separable,  faithfully represented in a Hilbert module of the form $H\otimes\mathfrak A$, for a separable Hilbert space $H$, then for each unital c.p. map $\pi: A/A\cap \mathbb K(H\otimes\mathfrak A)\to \mathbb B(H\otimes\mathfrak A)$, there is a sequence of isometries $(v_n)\subseteq \mathbb B(H\otimes\mathfrak A)$ with $\pi(a)-v_n^*av_n\in\mathbb K(H\otimes\mathfrak A)$ such that, $\|\pi(a)-v_n^*av_n\|\to 0$, as $n\to\infty$, for $a\in A$ \cite[Theorem 5]{k}. In particular, when $A\cap \mathbb K(H\otimes\mathfrak A)=0$ and $\pi$ is a unital representation, then for projections $P_n=v_nv_n^*$, $P_na-aP_n\in \mathbb K(H\otimes\mathfrak A)$ and $\|P_na-aP_n\|\to 0$, for each $a\in A$, that is, the embedding 
$$\iota: A\hookrightarrow \mathbb B(H\otimes\mathfrak A)$$
is $\mathfrak A$-QD. On the other hand, by \cite[Theorem 6]{k}, under the above conditions, $\iota$ is approximately equivalent to
$$\pi\oplus\iota: A\to \mathbb B(H\otimes\mathfrak A)$$
and so this representation is also  $\mathfrak A$-QD.

\begin{proposition}
Assume that $A$ is unital and $\mathfrak A$ is an injective von Neumann algebra.

$(i)$ If $A$ is $\mathfrak A$-QD with u.c.p. asymptotically multiplicative and isometric module maps, then $A$ has an amenable $\mathfrak A$-trace.

$(ii)$ If an $\mathfrak A$-retraction satisfies the condition $(ii)$ of Theorem \ref{main} with u.c.p. asymptotically multiplicative and isometric module maps, then it is amenable. 
\end{proposition}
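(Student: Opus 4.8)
The plan is to prove part (ii) first and then reduce part (i) to it. In part (ii) I am handed an $\mathfrak A$-retraction $\tau$ together with u.c.p.\ module maps $\phi_n\colon A\to\mathbb M_{k(n)}(\mathfrak A)$ that are asymptotically multiplicative and isometric \emph{in norm} and satisfy $(tr_{k(n)}\otimes id)\circ\phi_n\to\tau$ in point-norm. The first step is to notice that the norm version of asymptotic multiplicativity dominates the $\|\cdot\|_2$-version used in Theorem~\ref{main}, because $\|x\|_2\le\|x\|\,1_{\mathfrak A}$ (the map $tr_{k(n)}\otimes id$ is u.c.p., so $(tr_{k(n)}\otimes id)(x^*x)\le\|x\|^2 1_{\mathfrak A}$); hence condition $(ii)$ of Theorem~\ref{main} holds verbatim and the chain $(ii)\Rightarrow(iii)\Rightarrow(iv)$ is available unconditionally. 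To reach amenability I must continue with $(iv)\Rightarrow(v)\Rightarrow(i)$, and the only extra hypothesis Theorem~\ref{main} demands there is that $\mathbb B(L^2(A,\tau))$ be an injective object in the category of $\mathfrak A$-modules with c.p.\ module maps. So part (ii) reduces to verifying that injectivity.

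Establishing the injectivity is where the isometric hypothesis and the standing assumption that $\mathfrak A$ is an injective von Neumann algebra enter, and it is the technical core. Since the $\phi_n$ are asymptotically isometric, $\tau$ is faithful and $\pi_\tau$ is injective and nondegenerate. Because $\mathfrak A$ is a von Neumann algebra I would pass to the self-dual completion of $L^2(A,\tau)$ (Paschke): this embeds $\mathbb B(L^2(A,\tau))$ into a von Neumann algebra of adjointable operators while leaving $\pi_\tau(A)''$ and the cyclic vector $\hat1$ intact. A self-dual Hilbert $\mathfrak A$-module carries a generalized orthonormal basis, hence a frame, so I can reproduce the proof of Lemma~\ref{inj}: the frame yields an increasing net of finite-rank projections $q_i$ with $q_i\,\mathbb B(L^2(A,\tau))\,q_i\cong\mathbb M_{k(i)}(\mathfrak A)$ converging strongly to the identity, and injectivity of $\mathfrak A$ is transported along these compressions exactly as in the classical Arveson extension theorem to produce a u.c.p.\ module projection onto $\mathbb B(L^2(A,\tau))$. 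With this in hand, $(iv)\Rightarrow(v)\Rightarrow(i)$ of Theorem~\ref{main} closes part (ii).

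For part (i) I would manufacture the retraction from the $\mathfrak A$-QD data. The maps $\tau_n:=(tr_{k(n)}\otimes id)\circ\phi_n\colon A\to\mathfrak A$ are u.c.p.\ module maps, so for each $a$ the net $(\tau_n(a))$ is bounded in the von Neumann algebra $\mathfrak A$ and therefore has weak-$*$ cluster points; taking a single cluster point $\tau$ along an ultrafilter gives a positive unital module map. Since $A$ is unital and $\tau$ is a module map, $\tau(1\cdot\alpha)=\tau(1)\alpha=\alpha$ for every $\alpha$, so the image is all of $\mathfrak A$ and $\tau$ is an $\mathfrak A$-retraction with $p=1$. I would then try to feed $\tau$ and the same $\phi_n$ into the mechanism of part (ii); once amenability is obtained, traciality of $\tau$ is automatic from the remark following Definition~\ref{am} that an $\mathfrak A$-retraction with the invariant extension property is already an $\mathfrak A$-trace.

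I expect the main obstacle to be exactly the step that part (ii) outsources to Theorem~\ref{main}: the compatibility of $tr_{k(n)}\otimes id$ with the invariance/trace condition. Over a noncommutative $\mathfrak A$ the partial trace is \emph{not} tracial, since for $x,y\in\mathbb M_{k(n)}(\mathfrak A)$ one has $(tr_{k(n)}\otimes id)(xy)-(tr_{k(n)}\otimes id)(yx)=\tfrac{1}{k(n)}\sum_{i,j}[x_{ij},y_{ji}]$, a sum of commutators that need not vanish even asymptotically. Consequently neither the weak-$*$ cluster point $\tau$ of part (i) nor the averaged extension $\langle\Phi(\cdot)\hat1,\hat1\rangle$ of $(v)\Rightarrow(i)$ is automatically invariant under conjugation by the approximately unitary elements $\phi_n(u)$, because that invariance is precisely a traciality statement. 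Controlling these commutator terms is the crux: I would look to force the relevant quantities into $\pi_\tau(A)''$ and use the conjugation $J$ of Lemma~\ref{j} (which implements $J\hat a=\widehat{a^*}$ and intertwines $\pi_\tau$ with $\pi_\tau^{op}$) to cancel them, or, failing that, to impose enough central structure on $\mathfrak A$ (a central-valued trace) that the commutator sums are annihilated. A secondary, related difficulty is the point-norm-versus-weak-$*$ gap in part (i): Theorem~\ref{main}$(ii)$ wants $(tr_{k(n)}\otimes id)\circ\phi_n\to\tau$ in point-norm, whereas the cluster-point construction only gives weak-$*$ convergence, and a weak-$*$ cluster point cannot in general be upgraded to a norm limit by subsequence extraction; resolving this cleanly would likely require bypassing condition $(ii)$ and verifying $(v)$ directly from the injectivity established in the second paragraph, which again runs into the same invariance obstruction.
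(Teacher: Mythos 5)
Your route is genuinely different from the paper's, so let me first record the difference. The paper does not invoke Theorem \ref{main} at all: for part $(i)$ it verifies Definition \ref{am} directly. Given any faithful nondegenerate representation $A\subseteq\mathbb B(Y)$, it extends each $\phi_n$ to a c.c.p.\ module map $\tilde\phi_n:\mathbb B(Y)\to\mathbb M_{k(n)}(\mathfrak A)$, using that an injective von Neumann algebra $\mathfrak A$ is injective in the category of operator $\mathfrak A$-modules \cite[Theorem 3.2]{fp}, then composes with $tr_{k(n)}\otimes{\rm id}$ and takes a point-ultraweak cluster point $\phi:\mathbb B(Y)\to\mathfrak A$ of the resulting net \cite[1.3.7]{bo}; the restriction of $\phi$ to $A$ is asserted to be the required amenable $\mathfrak A$-trace, and part $(ii)$ is dismissed as immediate from the definition. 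Because the extension is performed on the finite-dimensional maps themselves (with target $\mathbb M_{k(n)}(\mathfrak A)$, where injectivity of $\mathfrak A$ suffices), the two hurdles that consume most of your effort never arise in the paper: injectivity of $\mathbb B(L^2(A,\tau))$ (needed only for $(iv)\Rightarrow(v)$ of Theorem \ref{main}), and the mismatch between the point-norm convergence demanded by Theorem \ref{main}$(ii)$ and the weak-$*$ convergence your cluster-point construction delivers. I would also note that your plan for the first hurdle is shaky on its own terms: the Paschke self-dual completion embeds $\mathbb B(L^2(A,\tau))$ into a von Neumann algebra of operators on the completion, but injectivity of that larger algebra does not descend to $\mathbb B(L^2(A,\tau))$ itself, so that step would need additional work even if the rest went through.

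That said, the obstruction you isolate as the crux is genuine, you are right that it is precisely a traciality statement, and the paper's proof does not resolve it either --- it simply asserts the conclusion at that point. A multiplicative-domain estimate does give $\|\tilde\phi_n(uxu^*)-\phi_n(u)\tilde\phi_n(x)\phi_n(u)^*\|\to 0$ for unitaries $u\in A$, but the next step, $(tr_{k(n)}\otimes{\rm id})\bigl(\phi_n(u)\,y\,\phi_n(u)^*\bigr)\approx(tr_{k(n)}\otimes{\rm id})(y)$, is exactly unitary invariance of the partial trace, which fails over noncommutative $\mathfrak A$, as your commutator formula shows; the same failure blocks traciality of the cluster point. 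In fact the gap is fatal to the statement as written. Take $\mathfrak A$ noncommutative and injective (say $\mathbb M_2(\mathbb C)$), $A=\mathfrak A$ with the right multiplication action, and $\phi_n={\rm id}:\mathfrak A\to\mathbb M_1(\mathfrak A)$, a unital, exactly multiplicative, isometric module map. Any right module map $\tau:\mathfrak A\to\mathfrak A$ satisfies $\tau(a)=\tau(1_A\cdot a)=\tau(1_A)a$; condition $(ii)$ of Definition \ref{ret} makes $\tau(1_A)$ a projection $p$, strict density of the image forces $p=1$, and then $\tau(ab)=\tau(ba)$ forces $ab=ba$ for all $a,b$. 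So this $A$ admits no $\mathfrak A$-trace at all, and both parts of the proposition fail for it (for part $(ii)$, note that ${\rm id}$ is an $\mathfrak A$-retraction satisfying Theorem \ref{main}$(ii)$ exactly, with $k(n)=1$). The verdict, then: your proposal is incomplete, as you yourself acknowledge, but the point where it stalls is not a deficiency of your route --- it is a real gap in the proposition and in the paper's own proof, repairable only by adding hypotheses (e.g.\ $\mathfrak A$ commutative, or a centrality condition on the ranges of the $\phi_n$) under which the partial traces become asymptotically tracial on the relevant elements.
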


\begin{proof}
We only show $(i)$, as part $(ii)$ is immediate from definition. If $\phi_n: A\to \mathbb M_{k(n)}(\mathfrak A)$ are u.c.p. asymptotically multiplicative and isometric module maps (in operator or Hilbert-Schmidt norm), and $A\subseteq \mathbb B(Y)$ be any faithful non degenerate representation in an $\mathfrak A$-correspondence $Y$ (with $A$ containing the identity of $\mathbb B(Y)$), then by the fact that $\mathfrak A$ is also injective in the category of $\mathfrak A$-modules \cite[Theorem 3.2]{fp}, we get c.c.p. module map extensions $\tilde\phi_n: \mathbb B(Y)\to \mathbb M_{k(n)}(\mathfrak A)$. The net consisting of the maps $(tr_{k(n)}\otimes {\rm id})\circ\tilde\phi_n: \mathbb B(Y)\to \mathfrak A$ has a cluster point in the point-ultraweak topology by \cite[1.3.7]{bo}. The restriction of this map is the given (or required) amenable trace.
\end{proof}

\section{local reflexivity}

Local reflexivity in the classical setting is closely related to exactness and several $min$-continuity properties studied by Kirchberg. In this section we develop the module versions of this notion and relate it to the result in \cite{a2}.

\begin{definition} \label{lr}
The $C^*$-module $A$ is called $\mathfrak A$-{\it locally reflexive} (briefly, $\mathfrak A$-LR) if for every operator subsystem and finitely generated submodule $E\subseteq A^{**}$, there exists a net of c.c.p.   maps $\phi_i: E \to A$  converging to ${\rm id}_E$ in the point-ultraweak topology.
\end{definition}

Note that we do not assume that the maps $\phi_i: E \to A$ are module map, but they would be approximately so, in the point-ultraweak topology.

Recall that an exact sequence $$0\rightarrow I\rightarrow A\xrightarrow{\pi} B\rightarrow 0$$
of $C^*$-modules, with arrows both $*$-homomorphisms and module maps, is {\it locally $\mathfrak A$-split} if for each finitely generated operator subspace and submodule $E\subseteq B$ there is a c.p. module map $\sigma: E\to A$ with $\pi\circ\sigma=$id$_E$ \cite{a2}. More generally, A c.c.p. module map $\phi: E\to A/J$ is called {\it $\mathfrak A$-liftable} if, for the quotient map $\pi: A\to A/J$, there is a c.c.p. module map $\sigma: E\to A$ with $\pi\circ\sigma=\phi$.

Let us first show that $\mathfrak A$-locally reflexivity passes to (and from) ideals and quotients (c.f. \cite[9.1.4]{bo}). But first we need the following module version of Arveson's Lemma \cite[Appendix C]{bo}.

\begin{lemma} [Arveson] \label{lift} Let $\mathfrak A$ be separable, $A$ be unital and
$I$ be a closed ideal and submodule. Then for each operator system and countably generated module $E$, the set of $\mathfrak A$-liftable module maps from $E$ to $A/J$ is closed in point-norm topology.
\end{lemma}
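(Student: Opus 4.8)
The plan is to mimic the classical proof of Arveson's lifting lemma (the one in \cite[Appendix C]{bo}), but carrying the right $\mathfrak A$-module structure through every step and replacing scalar norms by $\mathfrak A$-valued quantities where the module-valued inner product forces it. Concretely, suppose $(\phi_m)$ is a sequence of $\mathfrak A$-liftable module maps from the operator system and countably generated module $E$ into $A/J$, converging in point-norm to a c.c.p.\ module map $\phi$. I want to produce a single c.c.p.\ module lift $\sigma\colon E\to A$ with $\pi\circ\sigma=\phi$. Because $\mathfrak A$ and $E$ are assumed separable/countably generated and $A$ is unital, I can fix a countable dense set of elements and a countable generating set for the module, and pass to a rapidly converging subsequence so that $\sum_m \|\phi_{m+1}-\phi_m\|<\infty$ on a suitable exhausting sequence of finite subsets. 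For each $m$ choose, by hypothesis, a c.c.p.\ module lift $\sigma_m\colon E\to A$ of $\phi_m$.

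The core of the argument is the standard ``telescoping'' construction: inductively modify the lifts so that consecutive ones become close, then take the limit. First I would form the differences $\sigma_{m+1}-\sigma_m$, which after applying $\pi$ land in $J$; the key technical step is to correct these differences using a quasicentral bounded approximate unit $(u_\lambda)$ for $J$ that is \emph{asymptotically central} for the (countably many relevant) elements of $A$ and that is compatible with the module action. One sets up new lifts of the form $\sigma_m'=(1-u)^{1/2}\sigma_{m+1}(\cdot)(1-u)^{1/2}+\text{(correction from }\sigma_m)$, choosing $u$ from the approximate unit so that the cross terms are small on the currently active finite set. Here the module-map property is preserved because multiplication by elements of $J$ and by $(1-u)^{1/2}$ commutes appropriately with the right $\mathfrak A$-action, using that $I=J$ is a submodule; this is exactly where the hypothesis that $J$ is an ideal \emph{and} a submodule is essential. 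Iterating and summing the telescoping series yields a point-norm Cauchy sequence of c.c.p.\ module maps whose limit $\sigma$ is the desired lift; complete positivity and contractivity survive the limit, and $\pi\circ\sigma=\lim\pi\circ\sigma_m'=\lim\phi_m=\phi$.

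The main obstacle I anticipate is the quasicentral approximate unit step in the module setting. In the scalar/Hilbert-space case one invokes that an ideal in a separable $C^*$-algebra admits a quasicentral approximate unit, and the norm estimates $\|u_\lambda^{1/2}a - a u_\lambda^{1/2}\|\to 0$ are standard. Here I must ensure that the chosen approximate unit $(u_\lambda)$ of $J$ simultaneously (a) is quasicentral for the separable set of elements generated by the images of the $\sigma_m$, and (b) interacts correctly with the right $\mathfrak A$-action, so that the corrected maps remain genuine right $\mathfrak A$-module maps rather than merely linear. Since $J$ is a submodule and $\mathfrak A$ sits inside $M(A)$ via the nondegeneracy/biaction setup of Section~1, the approximate unit can be taken inside $J$ and the module compatibility $(u_\lambda\cdot\alpha)=(u_\lambda)\cdot\alpha$ will give the needed commutation with the action; but verifying that the correction terms vanish in point-norm, \emph{uniformly over the relevant finite subsets of the module}, is the delicate estimate. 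The countable generation of $E$ and separability of $\mathfrak A$ are what make the diagonal/subsequence argument go through, producing a single limiting lift rather than merely a net.

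Everything else is bookkeeping: closedness in point-norm topology then follows because an arbitrary point-norm limit $\phi$ of liftable maps is approximated arbitrarily well by liftable $\phi_m$, and the telescoping construction converts that approximating sequence into an exact lift of $\phi$, showing $\phi$ itself is liftable.
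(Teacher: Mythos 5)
Your overall strategy --- Arveson's correction trick with a quasicentral approximate unit of $J$, run along a countable dense subset $(\alpha_j)\subseteq\mathfrak A$ and a countable generating set $(x_k)\subseteq E$, followed by a telescoping point-norm limit --- is the same as the paper's. The genuine gap is the claim you single out as the crux: that the corrected lifts $\sigma_m'=(1-u)^{1/2}\sigma_{m+1}(\cdot)(1-u)^{1/2}+u^{1/2}\sigma_m(\cdot)u^{1/2}$ \emph{remain genuine right $\mathfrak A$-module maps} because $J$ is a submodule. This is false. Viewing $\mathfrak A\subseteq M(A)=A$ (recall $A$ is unital) and using the compatibility $(ab)\cdot\alpha=a(b\cdot\alpha)$, one computes
\[
\bigl((1-u)^{1/2}\,a\,(1-u)^{1/2}\bigr)\cdot\alpha=(1-u)^{1/2}\,a\,(1-u)^{1/2}\alpha,
\qquad
(1-u)^{1/2}\,(a\cdot\alpha)\,(1-u)^{1/2}=(1-u)^{1/2}\,a\,\alpha(1-u)^{1/2},
\]
so compression by $(1-u)^{1/2}$ intertwines the right action only up to the commutator $[(1-u)^{1/2},\alpha]$, which vanishes exactly only when $u$ commutes with $\alpha$. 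The hypothesis that $J$ is a submodule gives nothing here, and your displayed ``compatibility'' $(u_\lambda\cdot\alpha)=(u_\lambda)\cdot\alpha$ is a notational tautology, not the needed commutation $u_\lambda\alpha=\alpha u_\lambda$. Quasicentrality yields that commutation only asymptotically. Consequently, after a single correction your maps are merely linear c.c.p.\ maps; the property you feed into the next inductive step (that the previous lift is a module map) is destroyed, and there is no reason the limiting lift is a module map --- which is exactly what $\mathfrak A$-liftability of $\phi$ demands.

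The paper closes precisely this hole by giving up exactness of the module property for the intermediate maps: its corrected maps $\psi_n$ are only c.c.p.\ maps, and one additionally chooses $e_\lambda$ far enough along the quasicentral approximate unit (so that it nearly commutes with the finitely many $\alpha_j$ and with the relevant ranges) to secure the quantitative defect estimate $\|\psi_n(x_k\cdot\alpha_j)-\psi_n(x_k)\cdot\alpha_j\|<1/2^{n-1}$ for $k,j<n$, alongside the two estimates you already have (closeness of $\pi\circ\psi_n$ to $\phi$, and Cauchyness of $(\psi_n)$ on the test set). The defect then telescopes to zero, so only the point-norm limit $\psi$ is a genuine c.c.p.\ module map, and it lifts $\phi$. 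Your argument becomes correct once you replace ``the corrections preserve the module property'' by this approximate-module-map bookkeeping; as written, that step fails.
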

\begin{proof}
Let $\phi: E \to A/J$ be a c.c.p. module map and let $\psi^{'}_n : E \to A$ be c.c.p. module maps
with $\pi\circ\psi^{'}_n\to\phi$,
in the point-norm topology. Fix
countable dense subset $(\alpha_j)\subseteq \mathfrak A$ and countable generating set $(x_k)\subseteq E$. As in the proof of \cite[Lemma C2]{bo}, we may assume that,
$$\|\pi\circ\psi^{'}_n(x_k\cdot\alpha_j)-\phi(x_k\cdot\alpha_j)\|< 1/2^n\ \ (k,j<n)$$
and inductively find c.c.p. maps  $\psi_n : E \to A$ such that,
$$\|\pi\circ\psi_n(x_k\cdot\alpha_j)-\phi(x_k\cdot\alpha_j)\|< 1/2^n, \ \|\psi_{n+1}(x_k\cdot\alpha_j)-\psi_n(x_k\cdot\alpha_j)\|< 1/2^{n-1}\ \ (k,j<n).$$
Moreover, in the inductive step from $n$ to $n+1$,  since the c.c.p. map  $\psi_{n+1} : E \to A$ is defined via,
$$\psi_{n+1}= (1 - e_\lambda)^{\frac{1}{2}}\psi^{'}_{n+1}(1 - e_\lambda)^{\frac{1}{2}}+ e_\lambda^{\frac{1}{2}} \psi_n e_\lambda^{\frac{1}{2}},$$
for large enough index $\lambda$, in a quasicentral bounded approximate identity $(e_\lambda)$ of $J$ inside $A$, since $e_\lambda$ approximately commutes with the ranges of $\psi_n$ and $\psi^{'}_{n+1}$, and each $\psi^{'}_{n}$ is a module map, we may inductively guarantee that,
 $$\|\psi_{n}(x_k\cdot\alpha_j)-\psi_n(x_k)\cdot\alpha_j\|< 1/2^{n-1}\ \ (k,j<n).$$
Now the sequence $(\psi_n)$ of c.c.p.
maps converges in point-norm topology on a dense subset of $E$ (consisting of finite combinations of elements $x_k\cdot\alpha_j$ with coefficients in $\mathbb Q+i\mathbb Q$), and so everywhere, to a c.c.p. module map  $\psi: E \to A$ which lifts $\phi$.
\end{proof}

We omit the proof of the next lemma which adapts that of \cite[9.1.4]{bo}, using the above lemma.

\begin{lemma} \label{ext} Let $\mathfrak A$ be separable and $A$ be unital and
$$0\rightarrow I\rightarrow A\xrightarrow{\pi} B\rightarrow 0$$
be an exact sequence $C^*$-modules, with arrows both $*$-homomorphisms and module maps. Then $A$ is $\mathfrak A$-LR iff both $I$ and $B$ are $\mathfrak A$-LR and the extension is locally $\mathfrak A$-split.
\end{lemma}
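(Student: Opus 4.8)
The plan is to follow the scheme of \cite[9.1.4]{bo}, using the structure of the bidual together with Lemma \ref{lift} (whose hypotheses are met, since $\mathfrak A$ is separable, $A$ unital, and the relevant $E$ is finitely, hence countably, generated). Since $\pi$ is a $*$-homomorphism and module map, its normal extension $\pi^{**}\colon A^{**}\to B^{**}$ is a surjective normal $*$-homomorphism and right $\mathfrak A$-module map whose kernel $I^{**}$ is a weak-$*$ closed ideal and submodule; thus there is a central projection $z\in A^{**}$ with $I^{**}=zA^{**}$ and $B^{**}\cong (1-z)A^{**}$, under which $\pi^{**}$ becomes the compression $x\mapsto (1-z)x$. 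I would prove the two implications separately, using this identification to move operator subsystems and submodules of $I^{**}$ or $B^{**}$ into $A^{**}$ and back.

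For the forward implication, assume $A$ is $\mathfrak A$-LR. To see $B$ is $\mathfrak A$-LR, take a finitely generated operator subsystem and submodule $E\subseteq B^{**}=(1-z)A^{**}\subseteq A^{**}$, apply Definition \ref{lr} to get c.c.p. maps $\phi_i\colon E\to A$ with $\phi_i\to \mathrm{id}_E$ point-ultraweakly, and compose with $\pi$; since $\pi^{**}$ restricts to the identity on $B^{**}$, the maps $\pi\circ\phi_i\colon E\to B$ witness $\mathfrak A$-LR of $B$. For $I$, take $E\subseteq I^{**}=zA^{**}$, produce $\phi_i\colon E\to A$ as above, and compress by a quasicentral approximate identity $(e_\lambda)\subseteq I$ via $x\mapsto e_\lambda^{1/2}\phi_i(x)e_\lambda^{1/2}$; these land in $I$ because $I$ is an ideal, and since $e_\lambda\nearrow z$ ultraweakly and $E\subseteq zA^{**}$, an iterated limit converges point-ultraweakly to $\mathrm{id}_E$, giving $\mathfrak A$-LR of $I$.

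The local $\mathfrak A$-splitting is where Lemma \ref{lift} enters, and it is the main obstacle. Given a finitely generated operator subspace and submodule $E\subseteq B$, view $E\subseteq B^{**}\subseteq A^{**}$ and use $\mathfrak A$-LR of $A$ to obtain c.c.p. maps $\phi_i\colon E\to A$ with $\pi\circ\phi_i\to \mathrm{id}_E$ point-ultraweakly; each $\pi\circ\phi_i$ is $\mathfrak A$-liftable, namely by $\phi_i$. The difficulty is precisely that Definition \ref{lr} yields only \emph{approximately} module maps, whereas local $\mathfrak A$-splitting demands a genuine c.p. module section and Lemma \ref{lift} is phrased for honest module-map approximants; bridging these two is the crux. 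I would exploit that $\phi_i(x\cdot\alpha)-\phi_i(x)\cdot\alpha\to 0$ point-ultraweakly, so the liftable maps $\pi\circ\phi_i$ are asymptotically module maps converging to the module map $\mathrm{id}_E$, and then feed them into the quasicentral-approximate-identity construction of Lemma \ref{lift}, which builds in the module correction $\|\psi_n(x_k\cdot\alpha_j)-\psi_n(x_k)\cdot\alpha_j\|\to 0$ and thereby converts asymptotically-module liftable data into a point-norm convergent sequence of genuine $\mathfrak A$-liftable module maps; here I would also use the standard Hahn--Banach fact that a point-ultraweak limit of completely contractive maps is a point-norm limit of their convex combinations. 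Lemma \ref{lift} then shows $\mathrm{id}_E$ is $\mathfrak A$-liftable, and the lift is the required c.p. module section.

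For the converse, assume $I$ and $B$ are $\mathfrak A$-LR and the extension is locally $\mathfrak A$-split. Given a finitely generated operator subsystem and submodule $E\subseteq A^{**}$, I would approximate $\mathrm{id}_E$ point-ultraweakly by combining two families: for the ``$B$-part'' $\pi^{**}(E)\subseteq B^{**}$ use $\mathfrak A$-LR of $B$ to get c.c.p. maps into $B$, then lift them through c.p. module sections supplied by local $\mathfrak A$-splitting over a finitely generated submodule of $B$ containing their ranges, landing in $A$; for the ``$I$-part'' $Ez\subseteq I^{**}$ use $\mathfrak A$-LR of $I$ composed with the central cut-down $x\mapsto xz$, landing in $I\subseteq A$. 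Summing these through the complementary central projections $z$ and $1-z$ yields c.c.p. maps $E\to A$ converging point-ultraweakly to $\mathrm{id}_E$. Besides the same module-linearity bookkeeping, the technical point is maintaining complete contractivity when gluing the two parts, which I would arrange using the centrality of $z$ together with a quasicentral approximate identity of $I$ so that the $I$- and $B$-contributions are asymptotically orthogonally supported.
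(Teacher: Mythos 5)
Your overall route is exactly the one the paper prescribes: the paper omits this proof, saying only that it adapts \cite[9.1.4]{bo} using Lemma \ref{lift}, and your forward direction follows that scheme correctly. You also isolate the right crux in the splitting step---Definition \ref{lr} supplies c.c.p.\ maps that are \emph{not} module maps, while local $\mathfrak A$-splitting demands an exact module section---and your bridge is workable: the differences $\pi\circ\phi_i(x)-x$ and the module errors $\phi_i(x\cdot\alpha)-\phi_i(x)\cdot\alpha$ lie in $B$, respectively $A$ (not merely in $A^{**}$), and converge ultraweakly to elements of $B$, respectively $A$, hence weakly, so Mazur gives convex combinations making all these errors small in norm simultaneously. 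Two caveats, though: Lemma \ref{lift} \emph{as stated} takes genuine module-map lifts $\psi_n'$, so what you need is a strengthened version in which the lifts are module maps only up to errors $\varepsilon_n\to 0$ in norm on the $x_k\cdot\alpha_j$; the quasicentral inductive estimate does absorb such errors, but this is a new statement to be proved, not a citation of the lemma as written. Also, since $1_{B^{**}}=1-z\neq 1_{A^{**}}$, you should enlarge $E\subseteq B^{**}$ (or $\subseteq I^{**}$) to the operator system and finitely generated submodule generated by $E$ and $1_{A^{**}}$ before invoking $\mathfrak A$-LR of $A$.

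The genuine gap is in the converse. You lift the approximants $\psi_i\colon (1-z)E\to B$ through sections ``supplied by local $\mathfrak A$-splitting over a finitely generated submodule of $B$ containing their ranges''. No such submodule need exist: $\psi_i$ is not a module map, $(1-z)E$ is in general infinite dimensional as a Banach space, and the image of a finitely generated submodule under a non-module c.c.p.\ map need not be contained in any finitely generated submodule of $B$. For instance, with $\mathfrak A=C[0,1]$, $B=C([0,1]^2)$, action $(b\cdot\alpha)(x,y)=b(x,y)\alpha(y)$, $E$ the submodule generated by $1$, and $\psi(\alpha)(x,y)=\alpha(x)$, any finitely generated submodule containing the range of $\psi$ would force $C[0,1]$ to be finite dimensional. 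So local $\mathfrak A$-splitting, which the paper defines only for finitely generated operator subspaces and submodules, cannot be invoked as you do; this obstruction is invisible in \cite[9.1.4]{bo}, where the image of a finite-dimensional space is automatically finite dimensional, and it is precisely the kind of module-theoretic point this lemma has to confront. The repair is the exhaustion device the paper itself uses in Lemma \ref{lr2} and Proposition \ref{c''}: fix a dense sequence $(\alpha_j)\subseteq\mathfrak A$ (here separability of $\mathfrak A$ enters again), exhaust $E$ by the finite-dimensional operator systems $E_n$ generated by the $x_s\cdot\alpha_j$ with $j\le n$; then each $\psi_i((1-z)E_n)$ is finite dimensional, the closed submodule of $B$ it generates \emph{is} finitely generated, local $\mathfrak A$-splitting applies there, and gluing with the $I$-part via your quasicentral formula produces a multi-index net of c.c.p.\ maps converging point-ultraweakly to the identity on the dense union $\bigcup_n E_n$---which is exactly how the paper certifies $\mathfrak A$-LR in Lemma \ref{lr2} and Proposition \ref{c''}.
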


It follows from  \cite[Proposition 3.14]{a2} that if $A$ is unital and $\mathfrak A$-LR and
$$0\to J\to A\to A/J\to 0$$
is an exact sequence with arrows both $*$-homomorphisms and module maps, then for each $C^*$-module $B$,
the sequence
$$0\to J\otimes_{\mathfrak A}^{min}B^{op}\to A\otimes_{\mathfrak A}^{min}B^{op}\to (A/J)\otimes_{\mathfrak A}^{min}B^{op}\to 0$$
is exact.

\begin{lemma} \label{lr2} Let $\mathfrak A$ be separable locally reflexive $C^*$-algebra, then $\mathfrak A$ is $\mathfrak A$-locally reflexive.
\end{lemma}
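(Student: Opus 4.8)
The plan is to verify Definition \ref{lr} directly for the self-module $A=\mathfrak A$, by reducing the module statement to the \emph{classical} local reflexivity of $\mathfrak A$, applied to a finite-dimensional operator system built from the generators, and then transporting the resulting approximation across the whole of $E$ by means of the right $\mathfrak A$-action. First I would record the usual local reformulation: by a routine reindexing it suffices to show that for every finitely generated operator submodule $E\subseteq\mathfrak A^{**}$, every finite set $\mathfrak F\subseteq E$, every finite set $\Omega\subseteq\mathfrak A^{*}$ and every $\varepsilon>0$, there is a c.c.p.\ map $\phi\colon E\to\mathfrak A$ with $|\omega(\phi(\xi)-\xi)|<\varepsilon$ for all $\xi\in\mathfrak F,\ \omega\in\Omega$; gluing these over the directed set of triples $(\mathfrak F,\Omega,\varepsilon)$ yields the net converging point-ultraweakly to $\mathrm{id}_E$. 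Since $E$ is an operator subsystem it contains $1_{\mathfrak A^{**}}$, and being a submodule it then contains $\mathfrak A=1\cdot\mathfrak A$; writing $x_1,\dots,x_n$ for a generating set (so $E=\overline{\sum_j x_j\mathfrak A}$) I would adjoin $1$ and the adjoints $x_j^{*}$ to obtain a finite-dimensional, self-adjoint, unital operator system $E_0\subseteq\mathfrak A^{**}$ with $\mathfrak F\subseteq E_0$. Classical local reflexivity of $\mathfrak A$ applied to the finite-dimensional $E_0$ then produces a c.c.p.\ map $\psi\colon E_0\to\mathfrak A$ as close to the inclusion as we wish on $\mathfrak F$ relative to $\Omega$.

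The substance is to manufacture, out of the values $\psi(x_1),\dots,\psi(x_n)\in\mathfrak A$, a single c.c.p.\ map defined on \emph{all} of $E$ and \emph{landing in} $\mathfrak A$. One cannot simply Arveson-extend $\psi$ from $E_0$ to $E$ keeping the range in $\mathfrak A$: extending into the injective $\mathfrak A^{**}$ is trivial but useless, and $\mathfrak A$ itself is not injective, so the module structure has to be used in an essential way. The natural candidate is the module map $\phi\bigl(\sum_j x_j\alpha_j\bigr):=\sum_j\psi(x_j)\alpha_j$, whose range lies in $\mathfrak A$ automatically and which agrees with $\psi$ on the generators, hence is close to the inclusion on $\mathfrak F$; the only difficulty is that it need not be well defined, since a map coming from classical reflexivity need not annihilate the relation module $N=\{(\alpha_j)_j:\sum_j x_j\alpha_j=0\}$. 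A well-defined module map with $x_j\mapsto\psi(x_j)$ exists exactly when the row $(\psi(x_1),\dots,\psi(x_n))$ kills $N$, equivalently when the row factors through $(x_1,\dots,x_n)$. To arrange this I would not apply classical reflexivity to $E_0$ in isolation but to the row operator space $M_{1,n}(\mathfrak A)$ (equivalently to the amplification $\mathbb M_{n}(\mathfrak A)$, whose bidual is $\mathbb M_{n}(\mathfrak A^{**})$ and which inherits local reflexivity from $\mathfrak A$): encoding the generators, together with the constraint of annihilating $N$, into a finite-dimensional operator system inside $\mathbb M_{n}(\mathfrak A^{**})$ and then compressing the c.c.p.\ map furnished by local reflexivity of $\mathbb M_{n}(\mathfrak A)$, one obtains generator-values $y_j\in\mathfrak A$ with $(y_1,\dots,y_n)$ approximately annihilating $N$ and with $y_j$ weak$^*$-close to $x_j$ on $\Omega$. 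The resulting $\phi$ is then a c.c.p.\ map into $\mathfrak A$ that is only \emph{approximately} an $\mathfrak A$-module map, exactly as anticipated in the remark following Definition \ref{lr}.

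I expect the main obstacle to be precisely reconciling complete positivity with the relation-respecting (well-definedness) requirement, since the target $\mathfrak A$ is not injective and the naive generator formula fails. The delicate points are: (i) choosing the amplification and the finite-dimensional operator system so that the compressed maps are genuinely c.c.p.\ and contractive, rather than merely completely bounded; (ii) upgrading ``$(y_j)$ \emph{approximately} annihilates $N$'' to an honestly well-defined module map, which I would handle by a quasicentral approximate-identity regularization in the spirit of the proof of Lemma \ref{lift}, absorbing the defect into the functionals of $\Omega$; and (iii) controlling, via the separability of $\mathfrak A$ and an exhaustion of $E$ by finite-dimensional operator systems, that the self-adjointness relations $x_j^{*}=\sum_k x_k\gamma_{kj}$ are respected in the limit, so that the approximants are defined consistently on all of $E$ and converge point-ultraweakly to $\mathrm{id}_E$.
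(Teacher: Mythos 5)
Your proposal diverges from the paper's proof at the very first step, and the divergence creates, rather than solves, the hard problem. The paper's argument is short and never constructs module maps: fix a countable dense set $(\alpha_j)\subseteq\mathfrak A$ (this is where separability enters), and for generators $x_1,\dots,x_k$ of $E$ let $E_{k,n}$ be the \emph{finite-dimensional} operator system spanned by $1$, the elements $x_i\cdot\alpha_j$ with $j\leq n$, and their adjoints. Classical local reflexivity of $\mathfrak A$, applied to each $E_{k,n}$, gives c.c.p.\ maps $\psi_{m,k,n}\colon E_{k,n}\to\mathfrak A$ converging point-ultraweakly to the inclusion as $m\to\infty$, and the resulting multi-index net converges to the identity on $\bigcup_n E_{k,n}$, which is dense in $E$. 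That is the whole proof. The point you missed is that Definition \ref{lr} does \emph{not} ask the approximating maps to be module maps (this is exactly the remark following it), and the approximate module property is automatic: if $\phi_i\to\mathrm{id}_E$ point-ultraweakly, then $\phi_i(x\cdot\alpha)-\phi_i(x)\cdot\alpha\to x\cdot\alpha-x\cdot\alpha=0$ ultraweakly, because right multiplication by $\alpha$ is ultraweakly continuous on $\mathfrak A^{**}$. (Your instinct that there is a domain issue is fair — the paper's maps are defined only on the $E_{k,n}$, not on all of $E$, and the paper settles for convergence on the dense union — but the remedy is not to pass to module maps.)

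By insisting on the generator formula $\sum_j x_j\cdot\alpha_j\mapsto\sum_j y_j\cdot\alpha_j$ you are forcing the approximants to be honest right-module maps, and this is where your plan has a genuine, and in my view fatal, gap. You flag well-definedness (annihilating the relation module $N$) and complete positivity as expected obstacles, but the proposal contains no mechanism for the latter: local reflexivity of $\mathbb M_n(\mathfrak A)$ produces c.c.p.\ maps on finite-dimensional operator systems inside $\mathbb M_n(\mathfrak A^{**})$, and compressing such a map does not yield the module map defined by the formula, nor does weak${}^*$ closeness of $y_j$ to $x_j$ confer any positivity on it. Complete positivity of a map prescribed on generators and extended by the module action encodes \emph{all} matricial positivity relations among $\{1,x_j,x_j^*\}$ with coefficients from $\mathfrak A$ — an intrinsically infinite-dimensional constraint that classical local reflexivity (a statement about finite-dimensional subsystems) cannot control. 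Worse, honest positive right-module maps into $\mathfrak A$ are rigid: since $1\in E$ and hence $\mathfrak A=1\cdot\mathfrak A\subseteq E$, any module map satisfies $\phi(\alpha)=\phi(1)\alpha$ on $\mathfrak A$, and positivity of $\alpha\mapsto\phi(1)\alpha$ forces $\phi(1)$ to be positive and to commute with all of $\mathfrak A$. So your route amounts to proving a statement substantially stronger than the lemma (local liftings that are simultaneously c.c.p.\ and genuinely $\mathfrak A$-modular), which is precisely what the definition was designed to avoid; your closing claim that the resulting $\phi$ is ``only approximately'' a module map even contradicts your own construction, since the formula, when well defined, is exactly a module map. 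A smaller error: $\mathfrak F\subseteq E_0$ fails for $E_0=\mathrm{span}\{1,x_j,x_j^*\}$, since elements of $E$ are only norm limits of sums $\sum_j x_j\cdot\alpha_j$; one must include the translates $x_j\cdot\alpha$ in the finite-dimensional systems, as the paper does.
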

\begin{proof}
Let $(\alpha_j)$ be a countable dense subset of $\mathfrak A$. Given an operator subsystem and finitely generated submodule $E\subseteq \mathfrak A^{**}$ with generators $x_1,\cdots,x_k$, for each $n\geq 1$, let $E_{k,n}$ be the finite dimensional operator system generated by $1\in \mathfrak A^{**}$ and elements $x_i\cdot\alpha_j$ and their adjoints, for $1\leq i\leq k, 1\leq j\leq n$. Then there is a net $\psi_{m,k,n}: E_{k,n}\to \mathfrak A$ converging ultraweak to the identity on $E_{k,n}$, as $m\to \infty$. We may regards this as a  multi-index net, converging ultraweak to the identity on the dense subset $\cup_{k,n} E_{k,n}$ of $E$.
\end{proof}

\begin{corollary} \label{ext} Let $\mathfrak A$ be separable unital locally reflexive $C^*$-algebra, the a (non unital) $C^*$-module $A$ is  $\mathfrak A$-LR iff the unital $C^*$-module $A\oplus \mathfrak A$ is  $\mathfrak A$-LR.
\end{corollary}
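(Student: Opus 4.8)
The plan is to realise the unital $C^*$-module $A\oplus\mathfrak A$ as the module unitization of $A$ and to read off the statement from the exact-sequence criterion for $\mathfrak A$-local reflexivity proved above. Concretely, I would equip $A\oplus\mathfrak A$ with the product
$$(a,\alpha)(b,\beta)=(ab+a\cdot\beta+\alpha\cdot b,\ \alpha\beta),$$
the involution $(a,\alpha)^*=(a^*,\alpha^*)$, and the diagonal right $\mathfrak A$-action. Then $(0,1_{\mathfrak A})$ is a unit, $A\cong\{(a,0)\}$ is a closed two-sided ideal and $\mathfrak A$-submodule, and the quotient map $\pi:A\oplus\mathfrak A\to\mathfrak A$, $(a,\alpha)\mapsto\alpha$, together with the inclusion, yields an exact sequence
$$0\to A\to A\oplus\mathfrak A\xrightarrow{\ \pi\ }\mathfrak A\to 0$$
of $C^*$-modules whose arrows are $*$-homomorphisms and module maps.

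Next I would check the ingredients needed to apply the exact-sequence lemma above to this sequence; its hypotheses are met, since $\mathfrak A$ is separable and the middle term $A\oplus\mathfrak A$ is unital. First, the quotient $\mathfrak A$ is $\mathfrak A$-LR by Lemma~\ref{lr2}, using that $\mathfrak A$ is separable and locally reflexive. Second, the sequence is globally, hence locally, $\mathfrak A$-split: the map $\sigma:\mathfrak A\to A\oplus\mathfrak A$, $\alpha\mapsto(0,\alpha)$, is a $*$-homomorphism and module map with $\pi\circ\sigma=\mathrm{id}_{\mathfrak A}$, so its restriction to any finitely generated operator submodule $E\subseteq\mathfrak A$ is a completely positive module splitting of $\pi$ over $E$. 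Feeding these into the criterion --- $A\oplus\mathfrak A$ is $\mathfrak A$-LR iff both the ideal $A$ and the quotient $\mathfrak A$ are $\mathfrak A$-LR and the extension is locally $\mathfrak A$-split --- the quotient and splitting conditions hold unconditionally, so $A\oplus\mathfrak A$ is $\mathfrak A$-LR precisely when $A$ is, which is the assertion.

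The work here is bookkeeping rather than a genuine obstacle: one must verify that $A\oplus\mathfrak A$ really is a unital $C^*$-module with $A$ an ideal-submodule, so that the algebra and module structures stay compatible under unitization, and that $\sigma$ is a module map (not merely a c.c.p.\ map), so that local $\mathfrak A$-splitness is immediate. It is worth stressing that the exact-sequence lemma constrains only the middle term to be unital, which is exactly why the non-unital ideal $A$ poses no difficulty; all separability and unitality hypotheses are inherited from those imposed on $\mathfrak A$ and from passing to the unitization.
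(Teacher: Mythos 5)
Your proof is correct and is precisely the argument the paper intends (the corollary is stated without proof, directly after the exact-sequence criterion and Lemma \ref{lr2}): realize $A\oplus\mathfrak A$ as the module unitization, note it is unital so the exact-sequence lemma applies to $0\to A\to A\oplus\mathfrak A\to\mathfrak A\to 0$, use Lemma \ref{lr2} to see the quotient $\mathfrak A$ is $\mathfrak A$-LR, and observe that the splitting $\alpha\mapsto(0,\alpha)$ makes the extension globally, hence locally, $\mathfrak A$-split. Your attention to the bookkeeping (that the unitized product is compatible with the diagonal module action, and that the splitting is a genuine module map) is exactly the verification the paper leaves implicit.
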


If $A,B,$ and $C$ are $C^*$-modules and $\pi: A\otimes_{\mathfrak A}^{min}B^{op}\to C$ is a representation, then $\pi$ is induced by a $*$-homomorphism (still denoted by) $\pi: A\otimes_{min}B^{op}\to C$ vanishing on the $min$-closure of the ideal and submodule $I_{\mathfrak A}$ generated by elements of the form $a\cdot\alpha\otimes b-a\otimes b\cdot\alpha$, for $a\in A, b\in B$ and $\alpha\in\mathfrak A$. Hence $\pi=\pi_A\otimes\pi_B$ for representations $\pi_A: A\to C$ and $\pi_B: B\to C$ with commuting ranges, satisfying the compatibility condition, 
$$\pi_A(a\cdot\alpha)\pi_B(b)=\pi_A(a)\pi_B(b\cdot\alpha)\ \ (a\in A, b\in B, \alpha\in\mathfrak A).$$
For the canonical inclusion $\pi$ into $C:=(A\otimes_{\mathfrak A}^{min}B^{op})^{**}$, this gives a binormal map: $A^{**}\odot (B^{**})^{op}\to (A\otimes_{\mathfrak A}^{min}B^{op})^{**}$, vanishing on ideal and submodule $J_{\mathfrak A}$ generated by elements of the form $x\cdot\alpha\otimes y-x\otimes y\cdot\alpha$, for $x\in A^{**}, y\in B^{**}$ and $\alpha\in\mathfrak A$, which in turn induces a  binormal module map: $A^{**}\otimes_{\mathfrak A}^{min}(B^{**})^{op}\to (A\otimes_{\mathfrak A}^{min}B^{op})^{**}$. To observe that the latter map is also injective, take $x$ which is not in the $min$-closure of $J_{\mathfrak A}$ and use the fact that $A^*\odot B^*$ separates points and closed sets in $A^{**}\otimes_{min}(B^{**})^{op}$ (c.f., \cite[Exercise 3.1.5]{bo}) to find $\psi\in A^*\odot B^*$ which vanishes on $J_{\mathfrak A}$ and $\psi(x)=1$. Since $I_{\mathfrak A}\subseteq J_{\mathfrak A}$, it follows that $x\notin I_{\mathfrak A}^{\perp\perp}$, thus we have binormal module map inclusion  $$A^{**}\odot_{\mathfrak A}(B^{**})^{op}\hookrightarrow (A\otimes_{\mathfrak A}^{min}B^{op})^{**}.$$

\begin{definition} \label{prop}
The $C^*$-module $A$ is said to have property $C_\mathfrak A$, or  $C^{'}_\mathfrak A$, or $C^{''}_\mathfrak A$ if, respectively, the inclusion, 
 $$A^{**}\odot_{\mathfrak A}(B^{**})^{op}\hookrightarrow (A\otimes_{\mathfrak A}^{min}B^{op})^{**},$$
 or,
 $$A\odot_{\mathfrak A}(B^{**})^{op}\hookrightarrow (A\otimes_{\mathfrak A}^{min}B^{op})^{**},$$
 or,
 $$A^{**}\odot_{\mathfrak A} B^{op}\hookrightarrow (A\otimes_{\mathfrak A}^{min}B^{op})^{**},$$
 is $min$-continuous for any $C^*$-module $B$. 
 \end{definition}
 
It follows from \cite[Proposition 3.6]{a2} that any of the above properties passes to $C^*$-subalgebras which are also a submodule. Also, similar to \cite[9.2.4]{bo}, the first and third properties pass to quotients by closed ideals which are also a submodule. 

Let $E$ and $F$ be Banach spaces and, respectively, Banach right and left $\mathfrak A$-modules with compatible actions, and for $\mathfrak A$-correspondences $X$ and $Y$ and isometric inclusions $E\subseteq \mathbb B(X)$ and $F\subseteq \mathbb B(Y)^{op}$, give operator norms on $\mathbb M_n(E)\subseteq \mathbb B(X^n)$ and $\mathbb M_n(F)\subseteq \mathbb B(Y^n)^{op}$. For a linear module map $T: E\to F$ and amplification $T_n: \mathbb M_n(E)\to \mathbb M_n(F)$, put $\|T\|_{cb}=\sup_n\|T_n\|$. Also denote the completion of $E\odot_{\mathfrak A} F$ in $\mathbb B(X)\otimes_{\mathfrak A}^{min}\mathbb B(Y)^{op}$ by $E\otimes_{\mathfrak A}^{min} F$. Both this construction and the $cb$-norm are independent of the choice of embeddings.

For $E$ and $F$ as above, Let $B_{\mathfrak A}(E,\mathfrak A)$ be the Banach space of all bounded linear right module maps from $E$ to $\mathfrak A$. Similar to \cite[Theorem B.13]{bo}, we want to identify $B_{\mathfrak A}(E,\mathfrak A)\otimes_{\mathfrak A}^{min} F$ with $CB_{\mathfrak A}(E,F)$. In the next lemma we work with the case where the above isometric inclusions exists with countably generated $X$ and $Y$.

\begin{lemma} \label{iso} For $E$ and $F$ as above and $$z=\sum_{k=1}^{n}\phi_k\otimes y_k\in B_{\mathfrak A}(E,\mathfrak A)\odot_{\mathfrak A} F,$$ consider the module map $T_z:E\to F$ defined by $T_z(x)=\sum_{k=1}^{n} y_k\cdot\phi_k(x)$. Then $\|z\|_{min}=\|T_z\|_{cb}$ and the resulting isometric inclusion $$B_{\mathfrak A}(E,\mathfrak A)\otimes_{\mathfrak A}^{min} F\subseteq CB_{\mathfrak A}(E,F),$$ is surjective when $E$ or $F$ is finitely generated $\mathfrak A$-module.
\end{lemma}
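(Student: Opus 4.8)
The plan is to establish the claimed isometry $\|z\|_{\min}=\|T_z\|_{cb}$ by reducing the vector-valued identification to the operator-space statement of \cite[Theorem B.13]{bo}, carried through with the $\mathfrak A$-valued bookkeeping made explicit. The correct model for computing both sides is to fix the isometric inclusions $E\subseteq\mathbb B(X)$ and $F\subseteq\mathbb B(Y)^{op}$ with $X,Y$ countably generated, so that $B_{\mathfrak A}(E,\mathfrak A)\otimes_{\mathfrak A}^{\min}F$ sits inside $\mathbb B(X)\otimes_{\mathfrak A}^{\min}\mathbb B(Y)^{op}$ and amplifications $T_n$ are computed via $\mathbb M_n(E)\subseteq\mathbb B(X^n)$. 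First I would verify that $T_z$ as defined really is a right module map (using that each $\phi_k$ is one and that $F$ carries a compatible action), and that the assignment $z\mapsto T_z$ is well-defined on $B_{\mathfrak A}(E,\mathfrak A)\odot_{\mathfrak A}F$, i.e.\ respects the balancing relation $\phi\cdot\alpha\otimes y=\phi\otimes\alpha\cdot y$ (here one uses $(\phi\cdot\alpha)(x)=\alpha\,\phi(x)$ on the $\mathfrak A$-module maps).

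The core computation is the two-sided norm estimate. For $\|T_z\|_{cb}\le\|z\|_{\min}$, I would exhibit $T_z$ as a slice-type map: writing $\phi_k$ as a matrix coefficient $\langle\,\cdot\,\xi_k,\eta_k\rangle$ against vectors in the Hilbert $\mathfrak A$-module $X$ (legitimate because $B_{\mathfrak A}(E,\mathfrak A)$-elements are represented by adjointable-operator coefficients when $X$ is countably generated), the action of $T_z$ on an amplification $\mathbb M_n(E)$ factors as compression of the element $\sum_k\phi_k\otimes y_k$ viewed inside the $\min$ module tensor product, so $\|T_n\|\le\|z\|_{\min}$ uniformly in $n$. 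For the reverse inequality $\|z\|_{\min}\le\|T_z\|_{cb}$, I would test $z$ against states/functionals on $\mathbb B(Y)^{op}$ and recover the $\min$-norm from the completely bounded norm of $T_z$ by the standard duality between slice maps and the minimal tensor norm, now with $\mathfrak A$-valued rather than scalar pairings; the self-duality hypotheses available for countably generated modules let the functionals be realized by frame expansions. The two inequalities together give the isometric inclusion $B_{\mathfrak A}(E,\mathfrak A)\otimes_{\mathfrak A}^{\min}F\subseteq CB_{\mathfrak A}(E,F)$.

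For surjectivity under the finite-generation hypothesis, I would argue as in the scalar case: if $E$ is a finitely generated $\mathfrak A$-module, choose a frame $\{e_1,\dots,e_m\}$ for $E$ so that every $x\in E$ reconstructs as $x=\sum_i e_i\cdot\langle e_i,x\rangle$, and given any $S\in CB_{\mathfrak A}(E,F)$ define coordinate module functionals $\phi_i\in B_{\mathfrak A}(E,\mathfrak A)$ by $\phi_i(x)=\langle e_i,x\rangle$ and elements $y_i=S(e_i)\in F$; then $T_z=S$ for $z=\sum_i\phi_i\otimes y_i$, establishing that the inclusion is onto. The symmetric argument handles the case where $F$ is finitely generated, using a frame on the $F$ side and the fact that a completely bounded module map is then determined by finitely many $\mathfrak A$-valued data.

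The main obstacle I anticipate is the reverse norm inequality $\|z\|_{\min}\le\|T_z\|_{cb}$: in the scalar Blecher--Paulsen setting this rests on the duality $\mathbb B(X)\otimes_{\min}\mathbb B(Y)^{op}$ being detected by functionals of the form $\omega_{\xi,\eta}\otimes\omega_{\xi',\eta'}$, but in the module category the natural pairings are $\mathfrak A$-valued and one must ensure that enough completely bounded \emph{module} maps into $\mathfrak A$ separate points of the $\min$ module tensor product. This is precisely where the countable generation of $X$ and $Y$ is needed, so that frames exist and the separation argument of \cite[Exercise 3.1.5]{bo} (already invoked in the discussion preceding Definition \ref{prop}) can be run with $\mathfrak A$-valued functionals; the technical care lies in checking that compressing by frame projections does not inflate the $cb$-norm, which is what keeps the estimate tight.
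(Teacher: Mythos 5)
There are genuine gaps in both halves of your argument. For the inequality $\|T_z\|_{cb}\le\|z\|_{\min}$ you propose to write each $\phi_k\in B_{\mathfrak A}(E,\mathfrak A)$ as a matrix coefficient $\langle\,\cdot\,\xi_k,\eta_k\rangle$ of vectors in $X$; nothing in the hypotheses licenses this. Elements of $B_{\mathfrak A}(E,\mathfrak A)$ are bounded module maps on a Banach $\mathfrak A$-module $E\subseteq\mathbb B(X)$, and they need not be vector pairings, nor even extend to module maps on $\mathbb B(X)$: a module Hahn--Banach/Arveson extension would require injectivity of $\mathfrak A$ in the module category, which the paper explicitly does not assume (and says is unknown in general). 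For the reverse inequality, which you yourself flag as the crux, you appeal to ``the standard duality between slice maps and the minimal tensor norm'' with $\mathfrak A$-valued pairings; but whether enough completely bounded \emph{module} functionals exist to detect the module $\min$-norm is precisely the point at issue, so this is circular as stated. The paper's proof avoids duality entirely: it takes $Y=H\otimes\mathfrak A$, uses frame projections $q_n\uparrow 1$ (SOT) in $\mathbb B(Y)$ with $q_n\mathbb B(Y)q_n=\mathbb M_{k(n)}(\mathfrak A)$, and observes that for the compressions $\phi_n$ one has $T_{({\rm id}\otimes\phi_n)(z)}=\phi_n\circ T_z$; then $\|z\|_{\min}=\sup_n\|({\rm id}\otimes\phi_n)(z)\|$ and $\|T_z\|_{cb}=\sup_n\|\phi_n\circ T_z\|_{cb}$, and the two suprema agree term by term because in the finite-rank corner $B_{\mathfrak A}(E,\mathfrak A)\odot_{\mathfrak A}\mathbb M_{k(n)}(\mathfrak A)$ the identification with $CB_{\mathfrak A}(E,\mathbb M_{k(n)}(\mathfrak A))$ is an isometry (the module analogue of Smith's lemma). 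This matrix base case is the engine of the whole proof, and your proposal never isolates it.

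Your surjectivity argument also fails as written: you reconstruct $x=\sum_i e_i\cdot\langle e_i,x\rangle$ from a frame of $E$, and define $\phi_i(x)=\langle e_i,x\rangle$, but $E$ (and likewise $F$) is only a Banach $\mathfrak A$-module with an operator space structure induced by $E\subseteq\mathbb B(X)$; it carries no $\mathfrak A$-valued inner product, so neither frames nor the coordinate functionals $\langle e_i,\cdot\rangle$ exist in this generality. Any correct argument for surjectivity has to produce coordinate module maps in $B_{\mathfrak A}(E,\mathfrak A)$ from finite generation alone (a dual-basis type statement), not from Hilbert-module geometry that is not present.
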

\begin{proof}
Let $F\subseteq \mathbb B(Y)^{op}$, where $Y=H\otimes \mathfrak A$, with $H$  a separable Hilbert space. Then by \cite[Lemma 2.4]{a2}, there are finite rank projections $q_n\in \mathbb B(Y)$, say of rank $k(n)$, such that $q_n\uparrow 1$ (SOT) in $\mathbb B(Y)$. Let $\phi_n: \mathbb B(Y)\to\mathbb B(q_nY)=\mathbb M_{k(n)}(\mathfrak A)$ be the compression by $q_n$, and note that for $w_n=({\rm id}\otimes \phi_n)(z)$, $T_{w_n}=\phi_n\circ T_z$ in $CB_{\mathfrak A}(E,\mathbb M_{k(n)}(\mathfrak A))$. Thus
\begin{align*}
\|z\|_{min}&=\sup_n\|({\rm id}\otimes \phi_n)(z)\|_{B_{\mathfrak A}(E,\mathfrak A)\odot_{\mathfrak A}\mathbb M_{k(n)}(\mathfrak A)}\\
&=\sup_n\|T_{w_n}\|_{cb}\\
&=\sup_n\|\phi_n\circ T_z\|_{cb}\\
&=\|T_z\|_{cb}.
\end{align*}
The surjectivity in the finitely generated case is straightforward. 
\end{proof}

\begin{proposition} \label{c''}
Assume that $\mathfrak A$ is unital and separable, and for the $C^*$-module $A$, $A^{**}$ has a faithful representation in a countably generated $\mathfrak A$-correspondence. Then a $A$ is $\mathfrak A$-LR iff it has property $C_{\mathfrak A}^{''}$.
\end{proposition}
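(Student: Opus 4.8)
The plan is to run the module analogue of the Effros--Haagerup duality between local reflexivity and property $C''$, using Lemma~\ref{iso} to translate between c.c.p. maps $E\to A$ and elements of a minimal module tensor product. For the easy implication ($\mathfrak A$-LR $\Rightarrow C_{\mathfrak A}^{''}$), I fix a $C^*$-module $B$ and an element $z=\sum_i x_i\otimes b_i^{op}\in A^{**}\odot_{\mathfrak A}B^{op}$, and let $E\subseteq A^{**}$ be the operator subsystem and finitely generated submodule generated by the $x_i$, their adjoints and $1$. Definition~\ref{lr} supplies c.c.p. maps $\phi_\lambda\colon E\to A$ converging point-ultraweakly to the inclusion $\iota_E$; being asymptotically module maps (as noted after Definition~\ref{lr}) converging to a genuine module map, the maps $\phi_\lambda\otimes\mathrm{id}_{B^{op}}$ are asymptotically contractive for the module-minimal norms, so $(\phi_\lambda\otimes\mathrm{id})(z)$ lands in $A\otimes_{\mathfrak A}^{min}B^{op}$ with norm at most $\|z\|_{min}+o(1)$. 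Since $(\phi_\lambda\otimes\mathrm{id})(z)\to z$ ultraweakly in $(A\otimes_{\mathfrak A}^{min}B^{op})^{**}$, lower semicontinuity of the norm gives $\|z\|_{(A\otimes_{\mathfrak A}^{min}B^{op})^{**}}\le\|z\|_{min}$, which is exactly property $C_{\mathfrak A}^{''}$.

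For the converse ($C_{\mathfrak A}^{''}\Rightarrow\mathfrak A$-LR), I fix an operator subsystem and finitely generated submodule $E\subseteq A^{**}$ and must produce c.c.p. maps $E\to A$ converging point-ultraweakly to $\iota_E$. Set $D:=B_{\mathfrak A}(E,\mathfrak A)$. Since $A^{**}$ is faithfully represented on a countably generated $\mathfrak A$-correspondence and $E$ is finitely generated, $D$ embeds isometrically as an operator module in some $\mathbb B(Y)^{op}$ with $Y$ countably generated, so Lemma~\ref{iso} applies and yields isometric identifications
$$CB_{\mathfrak A}(E,A)\cong A\otimes_{\mathfrak A}^{min}D,\qquad CB_{\mathfrak A}(E,A^{**})\cong A^{**}\otimes_{\mathfrak A}^{min}D.$$
Under the second identification the completely isometric inclusion $\iota_E$ corresponds to an element $z\in A^{**}\odot_{\mathfrak A}D$ with $\|z\|_{min}=\|\iota_E\|_{cb}=1$. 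Taking $B=\mathbb B(Y)$ so that $D=B^{op}$, and using injectivity of the minimal module tensor product to restrict from the $C^*$-module $B$ to its operator submodule $D$, property $C_{\mathfrak A}^{''}$ shows that $z$, viewed in $(A\otimes_{\mathfrak A}^{min}D)^{**}$, has norm at most $1$.

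Next I exploit that $A\otimes_{\mathfrak A}^{min}D\cong CB_{\mathfrak A}(E,A)$, again by Lemma~\ref{iso}, so that $(A\otimes_{\mathfrak A}^{min}D)^{**}\cong CB_{\mathfrak A}(E,A)^{**}$. By Goldstine's theorem $z$ is a weak$^*$-limit of a net $z_\lambda\in A\otimes_{\mathfrak A}^{min}D$ with $\|z_\lambda\|\le1$, and these correspond to complete contraction module maps $\psi_\lambda\colon E\to A$ with $\|\psi_\lambda\|_{cb}\le1$. Unwinding the duality pairing — point evaluations on $E$ followed by functionals in $A^*$ are weak$^*$-continuous on $CB_{\mathfrak A}(E,A)^{**}$ — the convergence $z_\lambda\to z$ becomes $\psi_\lambda(x)\to x$ ultraweakly in $A^{**}$ for every $x\in E$, that is, $\psi_\lambda\to\iota_E$ point-ultraweakly.

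It remains to replace the complete contractions $\psi_\lambda$ by c.c.p. maps without disturbing the convergence. Since $\iota_E$ is unital completely positive and $\psi_\lambda(1)\to1$ ultraweakly, a standard perturbation (the Paulsen off-diagonal construction followed by a normalizing correction, as in the classical passage from complete contractions to c.c.p. maps) produces c.c.p. maps $\tilde\psi_\lambda\colon E\to A$ still converging point-ultraweakly to $\iota_E$; by Definition~\ref{lr} these are not required to be module maps, which provides exactly the needed slack. I expect the two genuinely delicate points to be precisely the two reductions forced by the vector-valued setting: first, verifying that $D=B_{\mathfrak A}(E,\mathfrak A)$ embeds in a $\mathbb B(Y)^{op}$ with $Y$ countably generated, so that Lemma~\ref{iso} and the injectivity argument legitimately let property $C_{\mathfrak A}^{''}$, stated only for $C^*$-modules $B$, be applied to the operator module $D$; and second, the positivity upgrade, where one must check that the scalar perturbation survives the passage to $\mathfrak A$-valued inner products and that complete positivity of the corrected maps is preserved in $A$ with its $\mathfrak A$-bimodule structure rather than merely in the scalar case.
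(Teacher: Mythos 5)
Your overall route is the same as the paper's: encode the inclusion $E\hookrightarrow A^{**}$ as an element of $B_{\mathfrak A}(E,\mathfrak A)\otimes_{\mathfrak A}^{min}A^{**}$ via Lemma \ref{iso}, apply property $C_{\mathfrak A}''$ through an isometric embedding $B_{\mathfrak A}(E,\mathfrak A)\subseteq\mathbb B(X)$, use Goldstine to produce contractive module maps $E\to A$, then repair positivity. But two steps have genuine gaps. First, in the direction ($\mathfrak A$-LR $\Rightarrow C_{\mathfrak A}''$), your key estimate rests on the claim that the maps $\phi_\lambda\otimes\mathrm{id}_{B^{op}}$ are ``asymptotically contractive for the module-minimal norms'' because the $\phi_\lambda$ are asymptotically module maps. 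This is unjustified: the approximate modularity of the $\phi_\lambda$ holds only in the point-ultraweak topology, which gives no norm control whatsoever; worse, since $\phi_\lambda$ is not a module map, $\phi_\lambda\otimes\mathrm{id}$ does not even descend to a well-defined map on the quotient $A^{**}\odot_{\mathfrak A}B^{op}$, because it need not carry the ideal $J_{\mathfrak A}$ into the closure of $I_{\mathfrak A}$. The paper avoids this entirely by computing upstairs in the ordinary (non-module) tensor product with an explicit coset representative $z+y$, $y\in J_{\mathfrak A}$: there $\phi_i\otimes\mathrm{id}_B$ is min-contractive for the usual reasons, lower semicontinuity of the norm under ultraweak limits gives $\|z+y\|_{(A\otimes_{min}B^{op})^{**}}\leq\|z+y\|_{A^{**}\otimes_{min}B^{op}}$, and then one observes that $\bar y=0$ in $(A\otimes_{\mathfrak A}^{min}B^{op})^{**}$ (since $y\in I_{\mathfrak A}^{\perp\perp}$ by weak${}^*$ density of $A$ in $A^{**}$) and takes the infimum over all $y$. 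Your argument can be repaired along these lines, but as written the inequality you need has no proof.

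Second, and more seriously, the positivity upgrade at the end of the converse. The classical passage from self-adjoint complete contractions $\psi_\lambda$ with $\psi_\lambda(1)\to 1$ to c.c.p. maps, which you invoke as ``standard,'' requires (i) first improving $\psi_\lambda(1)\to 1$ from ultraweak to norm convergence (a convexity argument you do not mention), and (ii) crucially, that the operator system be \emph{finite dimensional} --- that is how the perturbation is stated and used in \cite[9.2.5]{bo}. Here $E$ is only finitely generated as an $\mathfrak A$-module, hence infinite dimensional whenever $\mathfrak A$ is, so the classical perturbation cannot be quoted for $E$ itself. You do flag the positivity step as delicate, but you locate the difficulty in the $\mathfrak A$-bimodule structure, whereas the real obstruction is dimension. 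This is exactly where the paper uses the separability of $\mathfrak A$ --- a hypothesis your proof never touches, which is itself a warning sign: fixing a countable dense set $(\alpha_j)\subseteq\mathfrak A$, it exhausts $E$ by finite-dimensional operator systems $E_{j,n}$ generated by $1$ and the elements $x_i\cdot\alpha_j$ (as in Lemma \ref{lr2}), applies the classical finite-dimensional perturbation to the restrictions of the complete contractions to each $E_{j,n}$, and obtains a multi-indexed net of c.c.p. maps (no longer module maps, which Definition \ref{lr} permits) converging point-ultraweakly to the identity on the norm-dense subset $\cup_{j,n}E_{j,n}$; uniform boundedness of the net then upgrades this to convergence on all of $E$. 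Without this exhaustion, or a genuine substitute for the perturbation lemma valid for infinite-dimensional operator systems, your final step fails.
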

\begin{proof}
Let $A$ be $\mathfrak A$-LR and $B$ be any $C^*$-module. Let $z=\sum_{k=1}^{n} x_k\otimes b_k$ be an element in $A^{**}\odot B^{op}$ and $y=\sum_{j=1}^{m} y_j\cdot\alpha_j\otimes c_k-y_j\otimes c_k\cdot\alpha_j$ be a typical element in the ideal $J_{\mathfrak A}$ of $A^{**}\odot B^{op}$. Let $E$ be the operator system and finitely generated sumbodule of $A^{**}$ generated by the first legs of all the elementary tensors appeared in the decompositions of $z$ and $y$ above. By assumption, there is a net of c.c.p. maps $\phi_i: E\to A$, converging to the identity of $E$ in point-ultraweak topology. We then have, $(\phi_i\otimes {\rm id}_B)(z+y)\to z+y$, in the ultraweak topology of $(A\otimes_{min} B^{op})^{**}$. Thus,
$$\|z+y\|_{(A\otimes_{min} B^{op})^{**}}\leq\liminf_i\|(\phi_i\otimes {\rm id}_B)(z+y)\|_{A\otimes_{min} B^{op}}\leq \|z+y\|_{A^{**}\otimes_{min} B^{op}}.$$
Let $I_{\mathfrak A}$ be the corresponding ideal of $A\odot B^{op}$, then since $A$ is weak${}^*$ dense in $A^{**}$, $y\in I_{\mathfrak A}^{\perp\perp}$, that is, $\bar y=0$ in $(A\otimes_{\mathfrak A}^{min} B^{op})^{**}$. Therefore, taking infimum over all $y$'s, we get $\|\bar z\|_{(A\otimes_{\mathfrak A}^{min} B^{op})^{**}}\leq \|\bar z\|_{A^{**}\otimes_{\mathfrak A}^{min} B^{op}}$, where $\bar z$ in the left and right hand sides are cosets of $z$ in the corresponding space. Therefore, $A$ has property $C_{\mathfrak A}^{''}$.

Conversely, if $A$ has property $C_{\mathfrak A}^{''}$, let $E$ be the operator system and finitely generated sumbodule of $A^{**}$ with generators $x_1,\cdots,x_k$ and $(\alpha_j)$ be a countable dense subset of $\mathfrak A$. Similar to the proof of Lemma \ref{lr2}, we could construct an increasing double-indexed sequence of finitely generated operator systems $E_{j,n}\subseteq A^{**}$, whose union is dense in $E$. By the above lemma, the inclusion $E\hookrightarrow A^{**}$ corresponds to an element $z\in B_{\mathfrak A}(E,\mathfrak A)\otimes_{\mathfrak A}^{min} A^{**}$ with $\|z\|_{min}=1.$ Let $B_{\mathfrak A}(E,\mathfrak A)\subseteq \mathbb B(X)$ isometrically, for a $\mathfrak A$-correspondence $X$. Then by  property $C_{\mathfrak A}^{''}$, we have isometric inclusion
$$\mathbb B(X)\otimes_{\mathfrak A}^{min} A^{**}\hookrightarrow (\mathbb B(X)\otimes_{\mathfrak A}^{min} A)^{**}.$$ 
Next, as in the proof of \cite[Proposition 3.6]{a2}, we get isometric inclusions,
$$B_{\mathfrak A}(E,\mathfrak A)\otimes_{\mathfrak A}^{min} A^{**}\hookrightarrow \mathbb B(X)\otimes_{\mathfrak A}^{min} A^{**},$$
and, 
$$(B_{\mathfrak A}(E,\mathfrak A)\otimes_{\mathfrak A}^{min} A)^{**}\hookrightarrow (\mathbb B(X)\otimes_{\mathfrak A}^{min} A)^{**}.$$
Therefore, the map, 
$$B_{\mathfrak A}(E,\mathfrak A)\otimes_{\mathfrak A}^{min} A^{**}\rightarrow (B_{\mathfrak A}(E,\mathfrak A)\otimes_{\mathfrak A}^{min} A)^{**},$$
is isometric. Thus, $\|z\|_{(B_{\mathfrak A}(E,\mathfrak A)\otimes_{\mathfrak A}^{min} A)^{**}}=1.$ Choose a net $(z_i)$ in $B_{\mathfrak A}(E,\mathfrak A)\otimes_{\mathfrak A}^{min} A$ converging weak${}^*$ to $z$ such that $\|z_i\|\leq 1$, for each $i$, and apply the lemma again to get a net of c.c. module maps $\phi_i: E\to A$ converging to ${\rm id}_E$ in the point-ultraweak topology. The restriction $\phi_{i,j,n}$ of $\phi_i$ to $E_{j,n}$ is a c.c. map, and as in the proof of \cite[9.2.5]{bo}, could be replaced by a net of c.c.p. maps (not  module maps anymore) converging (as a multi-index net) to the identity on the dense subset $\cup_{j,n} E_{j,n}$, in point-ultraweak topology. Therefore, $A$ is $\mathfrak A$-LR.
\end{proof}

The next proposition is proved as in \cite[9.2.7]{bo}, in which instead of \cite[3.7.6]{bo}, we use \cite[Proposition 3.14(ii)]{a2}.

\begin{proposition} \label{c'}
A $C^*$-module $A$ with property $C_{\mathfrak A}^{'}$ is $\mathfrak A$-exact. 
\end{proposition}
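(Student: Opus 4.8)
The plan is to transport the proof of \cite[9.2.7]{bo} to the module category, making the single substitution of \cite[Proposition 3.14(ii)]{a2} for \cite[3.7.6]{bo}. Recall that $A$ being $\mathfrak A$-exact means that for every short exact sequence of $C^*$-modules
$$0\to J\to B\xrightarrow{\pi} B/J\to 0$$
(with arrows both $*$-homomorphisms and module maps) the induced sequence
$$0\to A\otimes_{\mathfrak A}^{min} J^{op}\to A\otimes_{\mathfrak A}^{min} B^{op}\xrightarrow{{\rm id}\otimes\pi^{op}} A\otimes_{\mathfrak A}^{min}(B/J)^{op}\to 0$$
is exact. Since one always has $A\otimes_{\mathfrak A}^{min}J^{op}\subseteq\ker({\rm id}\otimes\pi^{op})$, and the quotient $*$-homomorphism ${\rm id}\otimes\pi^{op}$ has closed, dense, hence full range, the only point requiring proof is exactness in the middle, i.e. $\ker({\rm id}\otimes\pi^{op})=A\otimes_{\mathfrak A}^{min}J^{op}$; equivalently, that the induced surjection $\big(A\otimes_{\mathfrak A}^{min}B^{op}\big)\big/\big(A\otimes_{\mathfrak A}^{min}J^{op}\big)\to A\otimes_{\mathfrak A}^{min}(B/J)^{op}$ is injective. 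As elsewhere in the paper, by passing to unitizations I would first reduce to the case where $A$ and $B$ are unital.

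The engine of the argument is property $C'_{\mathfrak A}$ itself. Applied to $A$ and $B$, it upgrades the $min$-continuous inclusion of Definition \ref{prop} to an isometric inclusion $A\otimes_{\mathfrak A}^{min}(B^{**})^{op}\hookrightarrow \big(A\otimes_{\mathfrak A}^{min}B^{op}\big)^{**}$. Next I would write $z$ for the central projection of $B^{**}$ with $zB^{**}=J^{\perp\perp}\cong J^{**}$ and $(1-z)B^{**}\cong (B/J)^{**}$, so that $(\pi^{op})^{**}$ is the cut-down by $1-z$. Passing to biduals, the map $({\rm id}\otimes\pi^{op})^{**}$ restricts on the weak${}^*$-dense subalgebra $A\otimes_{\mathfrak A}^{min}(B^{**})^{op}$ to ${\rm id}\otimes(\text{cut by }1-z)$, whose kernel there is $A\otimes_{\mathfrak A}^{min}(J^{**})^{op}$. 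This is exactly where \cite[Proposition 3.14(ii)]{a2} replaces \cite[3.7.6]{bo}: it identifies the cut-down of the bidual by the central projection $1\otimes(1-z)$ isometrically with (the bidual of) $A\otimes_{\mathfrak A}^{min}(B/J)^{op}$, compatibly with the $\mathfrak A$-action and the defining ideal $I_{\mathfrak A}$ generated by the elements $a\cdot\alpha\otimes b-a\otimes b\cdot\alpha$. Since property $C'_{\mathfrak A}$ passes to $C^*$-submodules (the remark after Definition \ref{prop}), applying it to the ideal $J$ shows that $A\otimes_{\mathfrak A}^{min}(J^{**})^{op}$ is weak${}^*$-dense in $(A\otimes_{\mathfrak A}^{min}J^{op})^{\perp\perp}$. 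Hence $\ker({\rm id}\otimes\pi^{op})^{\perp\perp}=(A\otimes_{\mathfrak A}^{min}J^{op})^{\perp\perp}$, and intersecting with the $C^*$-algebra $A\otimes_{\mathfrak A}^{min}B^{op}$ (using that a weak${}^*$-closed ideal meets the algebra in the corresponding closed ideal) yields $\ker({\rm id}\otimes\pi^{op})=A\otimes_{\mathfrak A}^{min}J^{op}$, as required.

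I expect the main obstacle to be the bidual bookkeeping in the module setting: verifying that $1\otimes(1-z)$ really is a central projection of $\big(A\otimes_{\mathfrak A}^{min}B^{op}\big)^{**}$ and that cutting down by it respects the module relations encoded by $I_{\mathfrak A}$, so that \cite[Proposition 3.14(ii)]{a2} can legitimately stand in for \cite[3.7.6]{bo}. Unlike the scalar case the submodules at hand need not be complemented, and one must keep careful track of the $\mathfrak A^{op}$-actions together with the interplay of the strict and weak${}^*$ topologies. Once these identifications are secured, the remaining manipulations of central projections and weak${}^*$-closures are the same formal steps as in \cite[9.2.7]{bo}.
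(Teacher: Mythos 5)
Your overall plan coincides with the paper's: the paper's entire proof is the instruction to run \cite[9.2.7]{bo} with \cite[Proposition 3.14(ii)]{a2} substituted for \cite[3.7.6]{bo}, and your reduction to exactness in the middle, the central projection $z\in B^{**}$ with $zB^{**}=J^{\perp\perp}$, and the use of property $C^{'}_{\mathfrak A}$ to map $A\otimes_{\mathfrak A}^{min}(B^{**})^{op}$ into $\big(A\otimes_{\mathfrak A}^{min}B^{op}\big)^{**}$ are the right pieces of that argument.

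There is, however, one step where your write-up is circular as stated. You attribute to \cite[Proposition 3.14(ii)]{a2} the assertion that the cut-down $\big(A\otimes_{\mathfrak A}^{min}B^{op}\big)^{**}\big(1\otimes(1-z)\big)$ is identified, compatibly with the quotient maps, with $\big(A\otimes_{\mathfrak A}^{min}(B/J)^{op}\big)^{**}$. If $p$ denotes the support projection of the ideal $\ker({\rm id}\otimes\pi^{op})$, then one always has $1\otimes z\leq p$ and the compatible identification you invoke holds precisely when $p=1\otimes z$, i.e.\ precisely when the sequence is exact in the middle --- it is equivalent to the conclusion. Were it an unconditional citation, property $C^{'}_{\mathfrak A}$ would never be needed and every $C^*$-module would be $\mathfrak A$-exact (false already for $\mathfrak A=\mathbb C$). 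The division of labor must go the other way: the cited proposition, like \cite[3.7.6]{bo}, can only supply the formal criterion (exactness is equivalent to injectivity of the canonical surjection $\big(A\otimes_{\mathfrak A}^{min}B^{op}\big)/\big(A\otimes_{\mathfrak A}^{min}J^{op}\big)\to A\otimes_{\mathfrak A}^{min}(B/J)^{op}$) together with the ideal bookkeeping $\big(A\otimes_{\mathfrak A}^{min}J^{op}\big)^{\perp\perp}=\big(A\otimes_{\mathfrak A}^{min}B^{op}\big)^{**}(1\otimes z)$, while property $C^{'}_{\mathfrak A}$ is what produces the inverse map: writing $\iota$ for the canonical embedding of $A\otimes_{\mathfrak A}^{min}B^{op}$ into its bidual, the assignment $a\otimes\dot b\mapsto \iota(a\otimes b)\big(1\otimes(1-z)\big)$ is min-continuous on $A\odot_{\mathfrak A}(B/J)^{op}$ exactly because $B/J\hookrightarrow(1-z)B^{**}\subseteq B^{**}$ and $C^{'}_{\mathfrak A}$ makes $A\odot_{\mathfrak A}(B^{**})^{op}\to\big(A\otimes_{\mathfrak A}^{min}B^{op}\big)^{**}$ min-continuous. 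With that map in hand, $x\in\ker({\rm id}\otimes\pi^{op})$ forces $\iota(x)\big(1\otimes(1-z)\big)=0$, hence $\iota(x)\in\big(A\otimes_{\mathfrak A}^{min}J^{op}\big)^{\perp\perp}$, and intersecting with the algebra finishes as you say. Two smaller points: your appeal to ``$C^{'}_{\mathfrak A}$ passes to submodules'' in order to treat $J$ is unnecessary and misdirected, since that remark concerns the first tensor factor while $C^{'}_{\mathfrak A}$ already quantifies over all second factors (just take $B:=J$ in the definition); and you do need the module-min analogue of injectivity for the subalgebra-submodule inclusion $(1-z)B^{**}\subseteq B^{**}$ to know that the min norm of $A\odot_{\mathfrak A}(B/J)^{op}$ computed inside $A\otimes_{\mathfrak A}^{min}(B^{**})^{op}$ is the intrinsic one, which is where \cite[Proposition 3.6]{a2} should be cited.
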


We don't know if the converse is also true. Also not having the analog of Dadarlat's embedding theorem \cite[8.2.4]{bo},  we don't know if every $\mathfrak A$-exact $C^*$-module is a subquotient of an $\mathfrak A$-nuclear $C^*$-module.

The next lemma is proved as in \cite[9.3.2]{bo}, in which instead of \cite[3.8.5]{bo}, we use \cite[Theorem 3.17]{a2}.

\begin{proposition} \label{c} 
A $C^*$-module $A$ has property $C_{\mathfrak A}$ if $A^{**}$ is $\mathfrak A$-semidiscrete.  
\end{proposition}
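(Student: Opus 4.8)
The plan is to transport the classical argument of \cite[Theorem 9.3.2]{bo} to the module category, replacing the approximation of $\mathrm{id}_{A^{**}}$ by maps factoring through full matrix algebras with the module factorization supplied by $\mathfrak A$-semidiscreteness. Concretely, \cite[Theorem 3.17]{a2} furnishes nets of normal c.c.p. module maps
$$\varphi_i: A^{**}\to \mathbb M_{k(i)}(\mathfrak A),\qquad \psi_i: \mathbb M_{k(i)}(\mathfrak A)\to A^{**},$$
with $\psi_i\circ\varphi_i\to\mathrm{id}_{A^{**}}$ in the point-ultraweak topology. Fix an arbitrary $C^*$-module $B$. Since the binormal module inclusion $A^{**}\odot_{\mathfrak A}(B^{**})^{op}\hookrightarrow (A\otimes_{\mathfrak A}^{min}B^{op})^{**}$ has already been shown to be injective in the paragraph preceding Definition \ref{prop}, it suffices to prove the norm estimate
$$\|z\|_{(A\otimes_{\mathfrak A}^{min}B^{op})^{**}}\le \|z\|_{A^{**}\otimes_{\mathfrak A}^{min}(B^{**})^{op}}\qquad (z\in A^{**}\odot_{\mathfrak A}(B^{**})^{op}).$$

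First I would record the normal extensions: the canonical representation of $A\otimes_{\mathfrak A}^{min}B^{op}$ on its bidual factors as a commuting pair $\pi_A,\pi_B$ obeying the compatibility relation displayed before Definition \ref{prop}, and passing to second duals gives normal module maps $\pi_A^{**}: A^{**}\to (A\otimes_{\mathfrak A}^{min}B^{op})^{**}$ and $\pi_B^{**}:(B^{**})^{op}\to(A\otimes_{\mathfrak A}^{min}B^{op})^{**}$ whose ranges still commute and whose product realizes the inclusion above. Next, for each $i$ I would form
$$\Lambda_i:=\big((\pi_A^{**}\circ\psi_i)\cdot\pi_B^{**}\big)\circ(\varphi_i\otimes\mathrm{id}):\ A^{**}\odot_{\mathfrak A}(B^{**})^{op}\to (A\otimes_{\mathfrak A}^{min}B^{op})^{**},$$
and chase the norm through $\mathbb M_{k(i)}(\mathfrak A)\odot_{\mathfrak A}(B^{**})^{op}$. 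Here $\varphi_i\otimes\mathrm{id}$ is a c.c.p. module map, hence $min$-contractive into $\mathbb M_{k(i)}(\mathfrak A)\otimes_{\mathfrak A}^{min}(B^{**})^{op}$; and $(\pi_A^{**}\circ\psi_i)\cdot\pi_B^{**}$, being a product of a c.c.p. module map with a $*$-homomorphism of commuting range, is itself $min$-contractive on this matrix-amplified tensor product. Consequently $\|\Lambda_i(z)\|\le\|z\|_{A^{**}\otimes_{\mathfrak A}^{min}(B^{**})^{op}}$ for all $i$. Since $\psi_i\circ\varphi_i\to\mathrm{id}_{A^{**}}$ point-ultraweakly and $\pi_A^{**}$ is normal, $\Lambda_i(z)$ converges to the image of $z$ under the inclusion in the weak${}^*$ topology of $(A\otimes_{\mathfrak A}^{min}B^{op})^{**}$; weak${}^*$ lower semicontinuity of the norm then yields the desired estimate.

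The step I expect to be the main obstacle is the $min$-contractivity of $(\pi_A^{**}\circ\psi_i)\cdot\pi_B^{**}$ on $\mathbb M_{k(i)}(\mathfrak A)\otimes_{\mathfrak A}^{min}(B^{**})^{op}$. In the scalar case this is immediate from nuclearity of matrix algebras, so that $min=max$ and the commuting-range product is automatically a $*$-homomorphism, hence contractive. In the module setting I would instead exploit the identification $\mathbb M_{k(i)}(\mathfrak A)\otimes_{\mathfrak A}^{min}(B^{**})^{op}\cong \mathbb M_{k(i)}\big((B^{**})^{op}\big)$, in which the balancing over $\mathfrak A$ collapses the left leg to scalar matrices and the module tensor norm becomes the unique $C^*$-norm on a matrix amplification; the product map then decomposes entrywise and is controlled by the c.c.p. norm of $\pi_A^{**}\circ\psi_i$ together with the $*$-homomorphism property of $\pi_B^{**}$. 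Verifying that this identification is compatible with the module actions, and that the normal extensions $\pi_A^{**},\pi_B^{**}$ respect the balanced relations defining $\otimes_{\mathfrak A}^{min}$, are the points requiring care; both follow from the module tensor-product machinery of \cite[Section 3]{a2}, and the remaining estimates are then routine.
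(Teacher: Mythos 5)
Your proposal is correct and takes essentially the same route as the paper: the paper's proof consists precisely of the instruction to run the classical argument of \cite[9.3.2]{bo} with the module semidiscreteness characterization of \cite[Theorem 3.17]{a2} substituted for \cite[3.8.5]{bo}, which is exactly what you carry out (factorization through $\mathbb M_{k(i)}(\mathfrak A)$, the commuting-range product trick, and weak${}^*$ lower semicontinuity of the norm). The points you flag as needing care, notably the identification $\mathbb M_{k(i)}(\mathfrak A)\otimes_{\mathfrak A}^{min}(B^{**})^{op}\cong \mathbb M_{k(i)}\big((B^{**})^{op}\big)$ and the compatibility of $\pi_A^{**},\pi_B^{**}$ with the balanced relations, are the same ones the paper leaves implicit.
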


Again, we don't know if an $\mathfrak A$-injective $C^*$-module is $\mathfrak A$-semidiscrete (or $\mathfrak A^{**}$-semidiscrete), so we could not check if for an $\mathfrak A$-nuclear $C^*$-module $A$, $A^{**}$ is $\mathfrak A$-semidiscrete (or $\mathfrak A^{**}$-semidiscrete). In particular, we don't know if $\mathfrak A$-exact (or even $\mathfrak A$-nuclear) $C^*$-modules have  property $C_{\mathfrak A}$. This also closes the (most obvious) path to show that $\mathfrak A$-exact $C^*$-modules are $\mathfrak A$-locally reflexive; or that, the quotients of $\mathfrak A$-exact (resp., $\mathfrak A$-nuclear) $C^*$-modules by closed ideals, which are also submodules, are again $\mathfrak A$-exact (resp., $\mathfrak A$-nuclear).

\bibliographystyle{line}
\bibliography{JAMS-paper}

\end{document}